\newcommand{\be}{\begin{equation}}
\newcommand{\ee}{\end{equation}}
\newcommand{\beq}{\begin{eqnarray}}
\newcommand{\eeq}{\end{eqnarray}}
\newcommand{\uS}{\mathbb{S}^{n}}
\def\R{{\mathfrak R}}
\newtheorem{prop}{Proposition}[section]
\newtheorem{remark}[prop]{Remark}
\def\begeq{\begin{equation}}
\def\endeq{\end{equation}}
\def\R{\mathbb R}
\def\S{\mathbb S}
\def\odot{\setbox0=\hbox{$\bigcirc$}\relax \mathbin {\hbox
to0pt{\raise.5pt\hbox to\wd0{\hfil $\wedge$\hfil}\hss}\box0 }}
\numberwithin{equation} {section}
\numberwithin{equation}{section}
\newtheorem{theorem}{\bf Theorem}[section]
\newtheorem{proposition}[theorem]{\bf Proposition}
\newtheorem{lemma}[theorem]{\bf Lemma}
\begin{document}
\title[The $L_p$ dual Minkowski problem for capillary hypersurfaces]
 {The $L_p$ dual Minkowski problem for capillary hypersurfaces}

\author{ Ya Gao$^{\dagger,\ast}$}

\address{
 $^{\dagger}$School of Mathematical Science and Academy for Multidisciplinary Studies, Capital Normal University, Beijing 100048, China. }

\email{Echo-gaoya@outlook.com}

\thanks{$\ast$ Corresponding author}

\date{}
\begin{abstract}
In this paper, we consider the $L_p$ dual Minkowski problem for capillary hypersurfaces for $p>q$ and $q\leq 1$, which aims to find a capillary convex body with a prescribed capillary $(p,q)$-th dual curvature measure in the Euclidean half-space. We reduce it to a Monge-Amp\`ere type equation with a Robin boundary condition on the unit spherical cap, we prove that there exists a unique smooth solution that solves this problem provided $\theta\in (0,\frac{\pi}{2})$.
\end{abstract}

\maketitle {\it \small{{\bf Keywords}: $L_p$ dual Minkowski problem, Capillary hypersurfaces, Monge-Amp\`ere equation, Robin boundary condition.}

{{\bf MSC 2020}: Primary 52A39, 53J25. Secondary 58J05, 53B65.}}

\section{Introduction}
The classical Minkowski problem as a fundamental problem in the Brunn-Minkowski theory of convex geometry, aims to determine a convex body whose surface area measure corresponds to a given spherical Borel measure. This problem has been completely solved under a necessary and sufficient condition by Minkowski \cite{Min97, Min03}, Alexandrov \cite{Ale38, Ale39, Ale56}, Lewy \cite{Lew38}, Nirenberg \cite{Nir53}, Pogorelov \cite{Pog52}, Cheng-Yau \cite{CY76}, and others. 

The classical Brunn-Minkowski theory has been extended by the $L_p$ Brunn-Minkowski theory and the dual Brunn-Minkowski theory. Now, they are all becoming the center focus of convex geometry. The $L_p$-Minkowski problem which as a fundamental problem in the $L_p$ Brunn-Minkowski theory and greatly generalizes the classical Minkowski problem was introduced by Lutwak \cite{Lut.JDG.38-1993.131, Lut.Adv.118-1996.244} and has been extensively studied since then; see e.g.
\cite{
  BLYZ.JAMS.26-2013.831,
  CLZ.TAMS.371-2019.2623,
  Zhu.Adv.262-2014.909}
for the logarithmic Minkowski problem,
\cite{JLW.JFA.274-2018.826,
  JLZ.CVPDE.55-2016.41,
  Lu.SCM.61-2018.511,
  Lu.JDE.266-2019.4394,
  LW.JDE.254-2013.983,
  Zhu.JDG.101-2015.159}
for the centroaffine Minkowski problem, and
\cite{CW.Adv.205-2006.33,
  HLYZ.DCG.33-2005.699,
  LYZ.TAMS.356-2004.4359,
  Sta.Adv.167-2002.160}
for other cases of the $L_p$-Minkowski problem. For another, the dual Minkowski problem was first proposed by Huang, Lutwak, Yang and Zhang
in their recent groundbreaking work \cite{HLYZ.Acta.216-2016.325} and then
followed by
\cite{BHP.JDG.109-2018.411,
  HP.Adv.323-2018.114,
  HJ.JFA.277-2019.2209,
  LSW.JEMSJ.22-2020.893,
  Zha.CVPDE.56-2017.18,
  Zha.JDG.110-2018.543}.

  Recently, Lutwak, Yang and Zhang introduced the $L_p$ dual Minkowski problem in \cite{LYZ18}, which unifies the classical Minkowski problem, the $L_p$-Minkowski problem and the dual Minkowski problem. For the general $L_p$ dual Minkowski problem, much progress has already been made
\cite{BF.JDE.266-2019.7980,
  CHZ.MA.373-2019.953,
  CCL,
  HLYZ.JDG.110-2018.1,
  HZ.Adv.332-2018.57,
  LLL}. 
  In fact, for a convex body $K\subset \mathbb{R}^{n+1}$ containing the origin as its interior, the $L_p$ dual curvature measures $d\widetilde{C}_{p,q}$\footnote{Sometimes simply referred to as \emph{the $(p,q)$-th dual curvature measure}.} is defined as 
  \begin{equation*}\label{a12}
      d\widetilde{C}_{p,q} =\frac{1}{n+1}h_K^{1-p}\left(h_K^2 + |\nabla h_K|^2\right)^{\frac{q-n-1}{2}}\det(\nabla^2 h_K+h_K\delta_{ij})da,
  \end{equation*}
where $h_K$ is the support function of $K$, $\nabla h_K$ and $(h_K)_{ij}$ are the gradient and the Hessian of $h_K$ on the unit sphere $\uS$ with respect to an orthonormal basis respectively, 
$da$ is the standard spherical area measure.\\
{\bf The $L_p$ dual Minkowski problems.} \emph{Given a finite nonzero Borel measure $m$ on $\uS$ and real numbers $p,q$, does there exist a convex hypersurface (as a boundary of a convex body) such that its induced area measure $d\tilde{C}_{p,q}$ equals $m$?}

When the given measure $m$ has a density $f$, the $L_p$ dual Minkowski problems becomes the existence problem of the following Monge-Amp\`ere equation on $\uS$:
$$\det(h_{ij}+ h\delta_{ij} ) = fh^{p-1}(h^2 + |\nabla h|^2)^{\frac{n+1-q}{2}},$$
where $f$ is a given positive smooth function on $\uS$, $\delta_{ij}$ is the Kronecker delta, $\nabla h$ and $(h_{ij})$ are the gradient and the Hessian of $h$ on $\uS$ with respect to an orthonormal basis respectively.

From the above Monge-Amp\`ere equation, it can be seen that when $q=n+1$, the $L_p$ dual Minkowski problem becomes the $L_p$ Minkowski problem, which includes the classical Minkowski problem; when $p=0$, the $L_0$ dual Minkowski problems becomes the dual Minkowski problem, if further that $q=0$, the dual Minkowski problem becomes the Aleksandrov problem.

As classical Minkowski problems and its related problems have progressed, similar questions have naturally emerged with the boundary problems. Busemann \cite{Busemann59} considered the Minkowski's and related problems for convex surfaces with boundaries very early, then, Oliker \cite{Oli82} introduced and studied one boundary problem which equals to solve the Monge-Amp\`ere equation of classical Minkowski problem with the vanish Dirichlet boundary condition, some other Dirichlet boundary value problems have also been studied by \cite{CW95, Pog78, Sch18, Sch03}. Recently, some results introduce the problems with a Robin (or Neumann) boundary value condition for convex capillary hypersurfaces.

\subsection{Setup and the problem }

Let $\{E_i\}_{i=1}^{n+1}$ be the standard orthonormal basis of $\R^{n+1}$, $\R^{n+1}_{+}=\{x\in\R^{n+1}| x\cdot E_{n+1}>0\}$ be the upper Euclidean half-space, $\Sigma\subset\overline{\R^{n+1}_{+}}$ be a properly embedded, smooth compact hypersurface with boundary such that
$$int(\Sigma)\subset\R^{n+1}_{+}\qquad and \qquad\partial\Sigma\subset\partial\R^{n+1}_{+}.$$
We call $\Sigma\subset\overline{\R^{n+1}_{+}}$ a {\emph{capillary hypersurface}} if $\Sigma$ intersects $\partial\R^{n+1}_{+}$ at a constant contact angle $\theta\in (0,\pi)$. Let $\nu$ be the unit outward normal, i.e., the Gauss map of $\Sigma$ with respect to the domain $\hat{\Sigma}$, where $\hat{\Sigma}$ is a bounded closed region in $\overline{\R^{n+1}_{+}}$ enclosed by the convex capillary hypersurface $\Sigma$ and $\partial\R^{n+1}_{+}$, and denote $\hat{\partial\Sigma}:=\partial\hat{\Sigma}\backslash\Sigma\subset\partial\R^{n+1}_{+}$. The contact angle $\theta$ is defined by 
$$\cos(\pi-\theta) = \langle \nu , e\rangle,$$
where $e:= -E_{n+1}$, hence $e$ is the unit outward normal of $\partial\R^{n+1}_{+}$. If $\Sigma$ is convex, then the Gauss image $\nu(\Sigma)$ of $\Sigma$ lies in the spherical cap
$$\S^n_{\theta}:=\left\{x\in\uS | \langle x, E_{n+1}\rangle \geq \cos\theta\right\}.$$
Instead of the usual Gauss map $\nu$, we give another map
$$\tilde{\nu}:= T \circ \nu: \Sigma\to C_{\theta},$$
where $C_{\theta}$ is a spherical cap defined by
$$C_{\theta}:= \left\{\xi\in\overline{\R^{n+1}_{+}} |~ |\xi-\cos\theta\cdot e|=1\right\},$$
which also a capillary hypersurface, $T: \S^n_{\theta}\to C_{\theta}$ is a translation in the vertical direction define by $T(z)=z+\cos\theta\cdot e$. The diffeomorphism map $\tilde{\nu}$ called the {\emph{capillary Gauss map}} of $\Sigma$, thus we can reparametrize $\Sigma$ using its inverse on $C_{\theta}$ (we can refer to \cite[Fig. 1]{MWW25a}). Hence, we also view the usual support function $h$ of $\Sigma$ as a function defined on $C_{\theta}$. 
Denote $\mathcal{K}_{\theta}$ as the set of all capillary convex bodies in $\overline{\R^{n+1}_{+}}$, and $\mathcal{K}^{\circ}_{\theta}$ as the family of capillary convex bodies with the origin as an interior point of the flat part of their boundary.

Recently, Mei, Wang and Weng \cite{MWW25a} introduce a capillary Minkowski problem, which asks for the existence of a strictly convex capillary hypersurface $\Sigma\subset\overline{\R^{n+1}_{+}}$ with a prescribed Gauss-Kronecker curvature on a spherical cap $C_{\theta}$. Subsequently, they further considered a capillary $L_p$-Minkowski problem for $p\geq 1$ in \cite{MWW25b}. Otherwise, Hu, Ivaki and Scheuer \cite{HIS25} consider the capillary Christoffel-Minkowski problem, Hu and Ivaki \cite{HI25} solve the even capillary $L_p$-Minkowski problem for the range $-n< p < 1$ and $\theta\in (0, \frac{\pi}{2})$ using an iterative scheme. Wang and Zhu \cite{WZ25} introduce a more general case, i.e., the capillary Orlicz-Minkowski problem.

In addition, we also attempted to solve these problems using the curvature flow as a tool. From this point of view, Mei, Wang and Weng \cite{MWW25c} studied a Gauss curvature type flow for capillary hypersurface, i.e., capillary Gauss curvature flow. Hu, Hu and Ivaki \cite{HHI25} obtained the long-time existence and asymptotic behavior of a class of anisotropic capillary Gauss curvature flow, they also can obtain the existence of smooth solutions to the capillary even $L_p$ Minkowski problem in the Euclidean half-space and capillary $L_p$ Minkowski problem as applications for flow. 

In this paper, we want to study the {\emph{capillary $L_p$ dual Minkowski problem}}, i.e., study the capillary $(p,q)$-th dual curvature measure $d\tilde{C}^c_{p,q}$ for a convex capillary body in $\overline{\mathbb{R}_+^{n+1}}$, which is defined by
$$d\tilde{C}^c_{p,q}:= \frac{1}{n+1}lh^{1-p}\left(h^2 + |\nabla h|^2\right)^{\frac{q-n-1}{2}}\det(\nabla^2 h+h\sigma)d\sigma,$$
where $\nabla h$ and $\nabla^2 h$ are the gradient and the Hessian of $h$ on $C_{\theta}$ with respect to the standard spherical metric $\sigma$ on $C_{\theta}$ respectively, and $l:=\sin^2\theta + \cos\theta\langle\xi,e\rangle$, $\xi\in C_{\theta}$.\\ 
\\
{\bf Capillary $L_p$ dual Minkowski problem.} \emph{Given a positive smooth function $f$ on $C_{\theta}$, does there exist a capillary convex body $\Sigma\in \mathcal{K}^{\circ}_{\theta}$ such that its capillary $(p,q)$-th dual curvature measure $d\tilde{C}^c_{p,q}$ equals to $fld\sigma$ ?}

By applying a similar argument to that in \cite[Proposition 2.4]{MWW25a}, the capillary $L_p$ dual Minkowski problem is actually equivalent to solve the following Neumann boundary value problem of the Monge-Amp\`ere type equation:
\begin{equation}\label{Eq}
\left\{
\begin{aligned}
&\det\left(\nabla^2 h + h\sigma\right)=fh^{p-1}\left(h^2 + |\nabla h|^2\right)^{\frac{n+1-q}{2}}, ~~&&in~~C_{\theta}\\
&\nabla_{\mu} h=\cot\theta\cdot h,  ~~&& on~~\partial C_{\theta}
\end{aligned}
\right.
\end{equation}
where $\mu$ is the unit outward normal of $\partial C_{\theta}\subset C_{\theta}$. Here, if 
$A:=\nabla^2 h + h\sigma >0 ~~in~C_{\theta}$
is a solution to Eq. \eqref{Eq}, then we called it by the convex solution.

\subsection{Main results}

We consider the capillary $L_p$ dual Minkowski problem for $p>q$, as formulated in Eq. \eqref{Eq}. The case $q=n+1$ is the capillary $L_p$-Minkowski problem, if $p>q=n+1$, it have been solved in \cite{MWW25b}. The main theorem of this paper extends the previous conclusion to $L_p$ dual Minkowski problem when $q\leq 1$ and it stated below.

\begin{theorem}\label{main1.1}
Let $p>q$, $q\leq 1$ and $\theta\in (0, \frac{\pi}{2})$. For any positive smooth function $f$ defined on $C_{\theta}$, then there exists a unique smooth solution $h$ solving Eq. \eqref{Eq}. 
\end{theorem}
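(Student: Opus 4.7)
My plan is to prove existence by the method of continuity and uniqueness by a ratio comparison argument exploiting $p>q$.

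\emph{Continuity setup.} Let $h_0$ be the support function of a spherical-cap capillary body meeting $\partial\R^{n+1}_+$ at angle $\theta$; then $h_0$ is a smooth strictly convex solution of \eqref{Eq} for some explicit positive smooth $f_0$ on $C_\theta$, and automatically satisfies the Robin boundary condition by construction. Deform by $f_t:=(1-t)f_0+tf$, $t\in[0,1]$, and let $I\subseteq[0,1]$ be the set of parameters for which the Robin problem \eqref{Eq} with datum $f_t$ admits a smooth strictly convex solution $h_t$ (i.e.\ $A_t:=\nabla^2 h_t+h_t\sigma>0$). Openness of $I$ follows from the implicit function theorem: the linearization is a uniformly elliptic second-order operator with oblique boundary condition, and one checks using \eqref{Eq} that the zeroth-order term has the sign (delivered by $p>q$) that renders it a Fredholm isomorphism on the appropriate Robin H\"older spaces. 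Closedness of $I$ reduces to uniform $C^{2,\alpha}$ a priori bounds for $h_t$ followed by Schauder estimates for oblique problems.

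\emph{A priori estimates.} I would establish the $C^0$ bound by comparison with $h_0$. Setting $w_t:=h_t/h_0$, the Robin condition is linear-homogeneous in $h$, so $\nabla_\mu w_t\equiv 0$ on $\partial C_\theta$. Hence at any point $x_0$ where $w_t$ attains its maximum on $\overline{C_\theta}$ (whether interior or boundary), $\nabla w_t(x_0)=0$ and $\nabla^2 w_t(x_0)\le 0$, so $A_{h_t}\le w_t(x_0)\,A_{h_0}$ at $x_0$. Substituting into \eqref{Eq} and using $h_t=w_t(x_0)h_0$ and $|\nabla h_t|^2 = w_t(x_0)^2|\nabla h_0|^2$ at $x_0$, the powers of $w_t$ collapse to $w_t(x_0)^{p-q}\le f_0(x_0)/f_t(x_0)$, and since $p>q$ this gives a uniform upper bound for $h_t/h_0$; the same argument at the minimum yields a lower bound. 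Gradient bounds then follow from convexity of $\hat\Sigma_t$ combined with the $C^0$ bound. The interior $C^2$ estimate is obtained by applying the maximum principle to a standard test function of the form $\log\lambda_{\max}(A)+\beta|\nabla h|^2+\gamma h$, as in \cite{MWW25b}; the new factor $(h^2+|\nabla h|^2)^{(n+1-q)/2}$ contributes only terms absorbable by the gradient bound. The boundary $C^2$ estimate follows the Lions--Trudinger--Urbas framework for oblique Monge--Amp\`ere equations, adapted to capillary Robin data as in \cite{MWW25b,MWW25a}.

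\emph{Uniqueness.} This is exactly the comparison used for the $C^0$ bound, with $h_0$ replaced by a hypothetical second solution. Given two solutions $h_1,h_2$ of \eqref{Eq} with the same $f$, setting $w:=h_1/h_2$ and repeating the argument above gives $w(x_0)^{p-q}\le 1$ at a maximum point $x_0$ (noting the Robin compatibility $\nabla_\mu w\equiv 0$ on $\partial C_\theta$), hence $w(x_0)\le 1$ because $p>q$. Symmetric treatment of the minimum yields $w\equiv 1$, so $h_1\equiv h_2$.

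The principal technical hurdle I anticipate is the boundary $C^2$ estimate: one needs to construct barriers that simultaneously respect the Robin data $\nabla_\mu h=\cot\theta\cdot h$ and the gradient-dependent right-hand side $(h^2+|\nabla h|^2)^{(n+1-q)/2}$. I expect to follow \cite{MWW25b} closely, verifying that the dual exponent $q$ enters only through lower-order terms already controlled by the $C^0$ and $C^1$ estimates.
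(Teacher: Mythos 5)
Your proposal is correct and follows the same overall skeleton as the paper: continuity method starting from the explicit spherical-cap solution (the paper takes $h=l$ with $f_0=l^{1-p}(l^2+|\nabla l|^2)^{(q-n-1)/2}$, which is exactly your $h_0,f_0$), openness from invertibility of the linearization whose zeroth-order coefficient is $(q-p)\mathcal{J}_1<0$ by homogeneity, closedness from uniform estimates, and uniqueness driven by $p>q$. The differences are at the lemma level and are worth recording. For the $C^0$ bound the paper passes to the capillary support function $u=h/l$, which obeys a Neumann condition, and must split into the cases $n+1-q\geq 0$ and $n+1-q<0$ because $\nabla(ul)\neq 0$ at a critical point of $u$; your ratio $w_t=h_t/h_0$ transforms the gradient term homogeneously ($\nabla h_t=w_t\nabla h_0$ at the extremum), so the exponents collapse to $w_t^{p-q}\le f_0/f_t$ in one stroke, avoiding the case analysis — a genuine simplification. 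Your uniqueness argument (maximum of $h^1/h^2$, Robin compatibility giving $\nabla_\mu w\equiv0$, and $w(x_0)^{p-q}\le1$) is likewise more elementary than the paper's Lemma 5.1, which introduces a scaling constant $m\ge1$, an integral linearization, and the strong maximum principle before invoking homogeneity to force $m=1$; both exploit the same degree count $n$ versus $n+p-q$. One caveat common to both treatments: at a boundary extremum of the ratio one needs the full Hessian inequality $\nabla^2 w\le0$, which requires noting that $\nabla_\mu w\equiv0$ on $\partial C_\theta$ kills the normal derivative, that differentiating this identity along $\partial C_\theta$ (using $\nabla_{e_k}\mu=\cot\theta\,e_k$) kills the mixed entries at the critical point, and a one-sided second-derivative test in the inward normal direction handles $w_{\mu\mu}$; the paper is equally terse here, so this is not a gap so much as a detail you should spell out. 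Finally, the $C^2$ estimate is the one place where your proposal remains a plan: the paper reduces the global bound to the boundary double-normal derivative via the test function $\nabla^2h(\Xi,\Xi)+L|\nabla h|^2$ and then bounds $h_{\mu\mu}$ on $\partial C_\theta$ with an auxiliary function $\langle\nabla h,\nabla\zeta\rangle-\cot\theta\,h$ and \cite[Corollary 14.5]{GT83}, confirming your expectation that the dual exponent $q$ enters only through terms controlled by the $C^0$ and $C^1$ bounds; to make your argument self-contained you would need to carry out this adaptation of \cite{MWW25a,MWW25b} rather than cite it.
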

In fact, we also can get the conclusion of $L_p$-Minkowski problem, i.e., the case of $p>q=n+1$ but we omit it in here. 

To prove the main Theorem,  we need to establish a priori estimate for solution to Eq. \eqref{Eq} up to the second derivative, and then apply the continuity method to obtain the existence of solution. First of all, we provide the uniform positive lower and upper bounds for solution to prevent the Eq. \eqref{Eq} to degenerate. 

When $p>q$, we use the capillary support function $u(\xi)$ which satisfies a Monge-Amp\`ere type equation \eqref{a2} with vanish Neumann boundary condition, by applying the maximum principle to establish the uniform positive lower and upper bounds for the solutions. Since the uniform bounded affected by the signs of $(p-1)$ and $(n+1-q)$, we analysis it by different cases, specific conclusions can be found in the Lemma \ref{C0 estimate}. Then, we give the roughly $C^1$-estimate which is based solely on convexity and is independent of the specific equation in Lemma \ref{c1-1}, i.e., the gradient estimate of solution $h$ can be bounded by constant contact angle $\theta$ and upper bound of $h$ as follow:
$$\max_{C_{\theta}}|\nabla h|\leq (1+\cot^2\theta)^{1/2}\Vert h\Vert_{C^0 (C_{\theta})},$$
so we have the upper bound of $|\nabla h|$. 

For the $C^2$ estimate, unlike approach in \cite{MWW25b}: they adopt the approach initiated by Lions-Trudinger-Urbas \cite{LTU86}, first reduce the global $C^2$ estimate to the boundary double normal estimate by choosing a suitable test function and then deduce the boundary double normal $C^2$ estimate by constructing an another suitable test function. Based on the general form of Eq. \eqref{Eq} and inspired by \cite{HIS25}, we have chosen a new auxiliary function
$$P:= \sigma_1 + \frac{1}{2}|\nabla h|^2 + Mh$$
to obtain the $C^2$ estimate, where $\sigma_1$ denote the trace of matrix $(A_{ij}): = (h_{ij}+ h\delta_{ij})$, $M$ is negative constant to be chosen later. Similarly, we first prove that the global $C^2$ estimate can be controlled by a constant or the boundary double normal estimate. Then, under the same auxiliary function without choosing a new text function, we prove the boundary double normal $C^2$ estimate.

\begin{remark}
    {\bf (a)} Although we obtain the $C^2$ estimate unlike approach in \cite{MWW25b}, we also can get the conclusion of $L_p$-Minkowski problem, i.e., the case of $p>q=n+1$ and we omit it in Theorem \ref{main1.1};\\
    {\bf (b)}
    We conclude that the range $\theta\in (0,\frac{\pi}{2})$ essential for obtaining the global $C^2$ estimate, as it ensures the strict convexity of $\partial C_{\theta}\subset C_{\theta}$. However, this restriction on the range is not necessary in establishing the $C^0$ and $C^1$ estimates for Eq. \eqref{Eq};\\ 
    {\bf (c)}
    We conclude that $q\leq 1$ essential for obtaining the interior $C^2$ estimate, as it ensures that the coefficient of the highest degree term of $\sigma_1$ is positive. However, we suspect that the condition $q\leq 1$ in Theorem \ref{main1.1} can be omitted.
\end{remark}

This paper is organized as follows. In Section 2, we will show some preliminaries which play a crucial role in our paper. In Section 3 and Section 4, several estimates, including $C^0$ estimate, gradient estimate and curvature estimate, of solutions to the Eq. \eqref{Eq} will be shown in details. In Section 5, we obtain the uniqueness and use the continuity method to prove Theorem \ref{main1.1}.

\section{Perliminaries } \label{se2}
This section is divided into two subsections. In the first subsection, we introduce the notion of capillary $L_p$ dual curvature measure, i.e., the capillary $(p,q)$-th dual curvature measure. 
In the second subsection, we provide some basic properties of capillary convex hypersurfaces.  

\subsection{The capillary $L_p$ dual Minkowski problem}

In this subsection, we introduce the $L_p$ dual curvature measures in \cite{LYZ18} and its situation under our capillary setting. 

Let $\Sigma$ be a capillary hypersurface of $\overline{\mathbb{R}^{n+1}_+}$, the support function $h$ of $\Sigma$ is defined by 
$$h (y) = \max \{ x\cdot y : x\in \Sigma\}, \quad y\in\mathbb{R}^{n+1}.$$

Suppose $\Sigma\in \mathcal{K}_{\theta}^{\circ}$, the radial function $\rho$ and radial map $r$ defined by
\begin{equation*}
    \rho (x) = \max\{\lambda: \lambda x\in K\}, \qquad x\in \mathbb{R}^{n+1}\backslash\{0\}
\end{equation*}
and
\begin{equation*}
    r: \uS_+\to \Sigma, \qquad r(\gamma)=\rho(\gamma)\gamma \in\Sigma
\end{equation*}
respectively, where $\uS_+$ is the upper sphere. For $\gamma\in\uS_+$, define the \emph{radial Gauss map $\alpha:\uS_+\to \uS_{\theta}$ of $\gamma$} by 
\begin{equation*}
    \alpha(\gamma) = \nu(\rho (\gamma)\gamma)\subset \uS_{\theta}
\end{equation*}
and $\alpha^{\ast}: \uS_{\theta} \to \uS_+$ is its reverse map. Similar to the definition of capillary Gauss map, instead of the usual radial Gauss map $\alpha$, we give another map 
\begin{equation*}
    \widetilde{\alpha}:= T\circ \alpha \circ r^{-1}  : \Sigma \to C_{\theta},
\end{equation*}
so its reverse map is $\widetilde{\alpha^{\ast}}: C_{\theta} \to \Sigma$. The map $\widetilde{\alpha}$ called the \emph{capillary radial Gauss map} of $\Sigma$, so we view the usual radial function $\rho$ of $\Sigma$ as a function defined on $C_{\theta}$.

For $K,L\in\mathcal{K}^{\circ}_{\theta}$ and $\lambda, \tau\geq 0$, the \emph{Minkowski combination} $\lambda K+ \tau L$ is defined by $\lambda K+ \tau L :=\{\lambda x+ \tau y: x\in K, y\in L\}$ and 
$$h_{\lambda K+ \tau L}:= \lambda h_{K} + \tau h_{L},$$
where $h_K$ and $h_L$ denote the support functions of the convex body $K$ and $L$ respectively.

The \emph{$L_p$ combination}, an extension of Minkowski combinations studied by Firey in the early 1960's, as defined by 
\begin{equation*}
    h^p_{\lambda K+_{p}\tau L }:= \lambda h_{K}^p + \tau h_{L}^p
\end{equation*}
for each $p\geq 1$, $K, L\in\mathcal{K}^{\circ}_{\theta}$ and $\lambda, \tau \geq 0$.

The $(p,q)$-th dual curvature measure was introduced by Lutwak, Yang and Zhang in \cite{LYZ18}. Based on the definition in \cite{LYZ18} and the above introduction, we can define the capillary $(p,q)$-th dual curvature measure of $\Sigma\in \mathcal{K}_{\theta}^{\circ}$ as 
$$ d\widetilde{C}_{p,q}^c =\frac{1}{n+1}lh_{\Sigma}^{1-p}\left(h_{\Sigma}^2 + |\nabla h_{\Sigma}|^2\right)^{\frac{q-n-1}{2}}\det(\nabla^2 h_{\Sigma}+h_{\Sigma}\sigma)d\sigma.$$

As mentioned earlier, here we have also give a more general area measure for the capillary convex body $\Sigma\in \mathcal{K}^{\circ}_{\theta}$. In particular, we can find that if $q=n+1$, it coincides with the capillary $L_p$-surface area measure, our conclusion will be reduced to the specific case that $p>n+1$ in \cite{MWW25b}.

\subsection{Basic properties of capillary convex hypersurfaces}
We first recall some basic properties of convex hypersurfaces in $\mathbb{R}^{n+1}$. As the set out in the previous section, let $\Sigma\subset\overline{\mathbb{R}^{n+1}_{+}}$ be a smooth, properly embedded, strictly convex capillary hypersurface, we parametrize $\Sigma$ by the inverse capillary Gauss map, i.e., $X: C_{\theta}\to \Sigma$ given by 
$$X(\xi) = \tilde{\nu}^{-1}(\xi) = \nu^{-1}\circ T^{-1}(\xi) = \nu^{-1}(\xi-\cos\theta e), \quad \xi\in C_{\theta}.$$
The usual support function $h$ of $\Sigma$ is given by
$$h(X) : =\langle X, \nu(X)\rangle.$$
The support function $h$ is equivalently defined by the unique decomposition
$$X = h(X)\nu (X) + s, \qquad s\in T_{X}\Sigma.$$
Now, we define the capillary support function $u$ by the following unique decomposition
\begin{equation}\label{e1}
     X = u(X)\tilde{\nu}(X) + s', \qquad s'\in T_{X}\Sigma.
\end{equation}
It's easy to see that
$$h(X) = u(X)\langle\nu(X), \tilde{\nu}(X)\rangle = u(X)(1+\cos\theta\langle\nu,e\rangle)$$
since $\tilde{\nu} := T\circ \nu = \nu + \cos\theta e$.
Now, let 
$$h(\xi):= \langle X(\xi),\nu(X(\xi))\rangle = \langle X(\xi), T^{-1}(\xi)\rangle = \langle\tilde{\nu}^{-1}(\xi), \xi-\cos\theta e\rangle,$$
referring to \cite[Proposition 2.4]{MWW25a}, it satisfies 
$$\nabla_{\mu}h = \cot\theta h, \qquad on~ \partial C_{\theta},$$
and 
$$u(\xi) = \frac{h(\xi)}{|\xi|^2 - \cos\theta\langle \xi,e\rangle} = \frac{h(\xi)}{\sin^2\theta + \cos\theta\langle \xi,e\rangle} = \frac{h(\xi)}{l(\xi)},$$
due to $|\xi - \cos\theta e|^2 =1$.

Now we compute the geometric quantities of $X$ in terms of $h$ and $u$. Let $\{e_i\}_{i=1}^n$ be a local orthonormal frame on $C_{\theta}$ such that along the $\partial C_{\theta}$, $e_n = \mu$ is the unit outward normal of $\partial C_{\theta}\subset C_{\theta}$. Together with $T^{-1}(\xi) = \xi -\cos\theta e$, which is the usual normal of $C_{\theta}$ at $\xi$, it builds on a local orthonormal frame in $\R^{n+1}_{+}$. Hence by the definition of $h$, for $X(\xi)\in\Sigma$,
$$X = \sum_{i=1}^n\langle X, e_i\rangle e_i + \langle X, T^{-1}(\xi)\rangle T^{-1}(\xi) = \sum_{i=1}^n\langle X, e_i\rangle e_i + h(\xi)T^{-1}(\xi).$$
Since $\nabla$ is the standard connection in $C_{\theta}$ with respect to the standard spherical metric $\sigma$, it's easy to see
$$\sum_{i=1}^n \langle X, e_i\rangle e_i = \nabla h(\xi).$$
Hence
$$X(\xi) = \nabla h(\xi) + h(\xi)T^{-1}(\xi).$$
By the direct computation
$$\nabla_{e_j}X = (\nabla_{ij}h + h\delta_{ij})e_i,$$
where $\delta_{ij}$ is the coefficients of the standard metric $\sigma$ on $\uS$. Hence, the second fundamental form $A_{ij}$ of $\Sigma$ is given by
$$A_{ij} = \langle \nabla_{e_i}X, \nabla_{e_i}(\xi-\cos\theta e)\rangle = \langle \nabla_{e_j}X, e_i\rangle = \nabla_{ij}h + h\delta_{ij},$$
and the induced metric $g_{ij}$ of $\Sigma\subset\overline{\R^{n+1}_{+}}$ can be derived by Weingarten's formula
$$g_{ij} = \langle \nabla_{e_i}X, \nabla_{e_j}X\rangle = A_{ik}A_{jl}\delta^{kl}.$$
The Gauss-Kronecker curvature of $\Sigma$ at $X$ is 
$$K_{G}(X(\xi)) = \det (g^{ik}A_{kj}) = \det (\nabla ^2 h(\xi)+ h(\xi)\sigma)^{-1}.$$

Both functions and formula will play a crucial role in our paper. 
Next, we give the equation of $h$ and $u$ on $\partial C_{\theta}$, which will be used in subsequent proof process.

Along the boundary $\partial C_{\theta}$, we choose an orthonormal frame $\{e_{i}\}_{i=1}^n$ with $e_n = \mu$ and $\mu$ is the unit outer normal of $\partial C_{\theta}$. Then 

\begin{proposition}\label{pro}
    The support function $h$ and the capillary support funtion $u=l^{-1}h$ satisfy the following boundary conditions on $\partial C_{\theta}$:
    \begin{equation}\label{pro1}
        h_{kn}=0,
    \end{equation}
    which is equivalent to 
    \begin{equation}\label{pro2}
        u_{kn} = -\cot\theta u_{k},
    \end{equation}
    where $k=1,2,\cdots, n-1$.
\end{proposition}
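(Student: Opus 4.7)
The plan is to establish \eqref{pro1} first from the capillary contact condition together with the structural identity $\nabla_{e_j}X=(h_{ij}+h\delta_{ij})e_i$ derived above, and then deduce \eqref{pro2} by expanding $h=lu$ and computing the relevant derivatives of $l$ on $\partial C_\theta$.

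For \eqref{pro1}, my starting point is that $\Sigma$ meets $\partial\R^{n+1}_+$ along $\partial\Sigma$, so $\langle X,E_{n+1}\rangle\equiv 0$ on $\partial\Sigma$. For $k<n$ the vector $e_k$ is tangent to $\partial C_\theta$ and $X$ is a diffeomorphism onto $\Sigma$, so $e_k$ pushes forward to a tangent vector of $\partial\Sigma$, giving $\langle\nabla_{e_k}X,E_{n+1}\rangle=0$. Plugging in $\nabla_{e_k}X=(h_{ik}+h\delta_{ik})e_i$, the sum collapses once I verify that $\langle e_i,E_{n+1}\rangle=0$ for $i<n$ (they lie in $T\partial C_\theta\subset\{\langle\cdot,E_{n+1}\rangle=0\}$) and that $\langle e_n,E_{n+1}\rangle=\langle\mu,E_{n+1}\rangle\neq 0$. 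To evaluate the latter I write $\mu\in\mathrm{span}(\xi,e)$ (the only direction in $T_\xi C_\theta$ normal to $\partial C_\theta$), impose $|\mu|=1$ and $\mu\perp(\xi-\cos\theta\,e)$, and fix the outward orientation to obtain $\mu=\cot\theta\cdot\xi+\sin\theta\cdot e$, whence $\langle\mu,E_{n+1}\rangle=-\sin\theta$. The equation $\langle\nabla_{e_k}X,E_{n+1}\rangle=0$ for $k<n$ then becomes $-\sin\theta\,h_{kn}=0$, yielding $h_{kn}=0$.

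For \eqref{pro2}, I use $h=lu$ to expand $h_{ij}=l_{ij}u+l_iu_j+l_ju_i+lu_{ij}$ and evaluate at a boundary point in the adapted frame. The required input is the gradient and Hessian of $l$ on $C_\theta$. Since $l=\sin^2\theta+\cos\theta\langle\xi,e\rangle$ is the restriction to the unit sphere of radius one centered at $\cos\theta\cdot e$ of a constant plus an ambient linear function, a short calculation exploiting that the ambient Hessian of the linear part vanishes gives $\nabla l=\cos\theta\,e^\top$ and $\nabla^2 l=(1-l)\sigma$, where $e^\top$ is the tangential projection of $e$ onto $C_\theta$. On $\partial C_\theta$ we have $\langle\xi,e\rangle=0$, so $l=\sin^2\theta$, $e^\top=\sin\theta\,\mu$, and hence $l_k=0$ for $k<n$, $l_n=\sin\theta\cos\theta$, and $l_{kn}=\cos^2\theta\,\delta_{kn}=0$ for $k<n$. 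Substituting these into the expansion of $h_{kn}$ with $k<n$ and invoking \eqref{pro1} reduces the identity to $\sin\theta\cos\theta\,u_k+\sin^2\theta\,u_{kn}=0$, i.e.\ $u_{kn}=-\cot\theta\,u_k$. The chain of identities is reversible, which gives the claimed equivalence.

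The only nontrivial ingredient is the identity $\nabla^2 l=(1-l)\sigma$; once this is in hand, and once $\mu$ has been expressed in the $(\xi,e)$-basis so that its ambient components are known, the remainder is a direct frame computation.
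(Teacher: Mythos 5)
Your proof is correct, but it follows a genuinely different route from the paper's. The paper proves \eqref{pro1} purely intrinsically on $C_{\theta}$: it differentiates the Robin condition $h_n=\cot\theta\,h$ tangentially along $\partial C_{\theta}$ and uses the Weingarten relation $\nabla_{e_k}e_n=\cot\theta\, e_k$ (i.e.\ the geodesic curvature $\cot\theta$ of $\partial C_{\theta}$ in $C_{\theta}$) to get $h_{kn}=\nabla_{e_k}h_n-\langle\nabla h,\nabla_{e_k}e_n\rangle=\cot\theta\,h_k-\cot\theta\,h_k=0$; then \eqref{pro2} follows by the identical computation applied to $u$, using the Neumann condition $u_n=0$. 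You instead prove \eqref{pro1} extrinsically, differentiating the contact condition $\langle X,E_{n+1}\rangle=0$ along $\partial C_{\theta}$, combining it with $\nabla_{e_k}X=(h_{ik}+h\delta_{ik})e_i$ and the explicit expression $\mu=\cot\theta\,\xi+\sin\theta\,e$ (so $\langle\mu,E_{n+1}\rangle=-\sin\theta\neq0$), and you get \eqref{pro2} by expanding $h=lu$ with the correct identities $\nabla l=\cos\theta\,e^{T}$, $\nabla^2 l=(1-l)\sigma$, which on $\partial C_{\theta}$ give $l=\sin^2\theta$, $l_k=0$, $l_n=\sin\theta\cos\theta$, $l_{kn}=0$ for $k<n$; all of these computations check out, and the equivalence is indeed reversible since $\sin\theta\neq0$. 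The paper's argument is shorter and needs only the Robin boundary condition, so it applies to any function satisfying it, without invoking the hypersurface $\Sigma$; your argument makes the geometric content transparent ($h_{kn}=0$ expresses that $X$ maps $\partial C_{\theta}$ into $\partial\R^{n+1}_{+}$) and derives the equivalence of \eqref{pro1} and \eqref{pro2} as a clean algebraic identity without needing $u_n=0$. The only point worth making explicit in your write-up is that in the PDE setting the given datum is the Robin condition, and the contact condition you start from is equivalent to it via $\langle X,e\rangle=\sin\theta\,h_\mu-\cos\theta\,h$ on $\partial C_{\theta}$ (using $e^{T}=\sin\theta\,\mu$ there); with that remark added, your proof is complete.
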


\begin{proof}
    First, taking tangential derivative of the equation $h_n = \cot\theta h$ along $\partial C_{\theta}$, we have $\nabla_{e_k}h_n = \cot\theta h_k$. It follows that 
    \begin{equation*}
\begin{split}
    h_{kn} &= \nabla^2 h(e_k,e_n) = \langle \nabla_{e_k}(\nabla h), e_n\rangle  \\ 
    &= 
    \nabla_{e_k}(\langle \nabla h, e_n\rangle) - \langle\nabla h,\nabla _{e_k}e_n\rangle \\ 
    &= \nabla_{e_k}h_n -\langle \nabla h, \cot\theta e_k\rangle \\ 
    &= 
    \cot\theta h_k - \cot\theta h_k = 0,
\end{split}
    \end{equation*}
    where we have used $\nabla_{e_k}e_n = \cot\theta e_k$. Then the equation \eqref{pro1} follows. Using the same computation and the fact $u_n =0$ in \eqref{a2}, we also have \eqref{pro2}. We complete the proof.
\end{proof}

\section{A priori estimates} \label{se3}
In this section, we give a priori estimates (including $C^0$ and $C^1$ estimates) for solutions to Eq. \eqref{Eq}.

\begin{lemma}[{\bf $C^0$ estimate}]\label{C0 estimate}
    Let $p>q$ and $\theta\in(0,\pi)$. Suppose $h$ is a positive solution to Eq. \eqref{Eq}. Then there holds
    \begin{equation}\label{a1}
        2^{-\frac{n+1-q}{2}}\frac{(1-\cos\theta)^{p-q}}{(\sin\theta)^{2(p-1)}}\min_{C_{\theta}}f^{-1}\leq h^{p-q} \leq 
        \frac{(\sin\theta)^{2(p-q)}}{(1-\cos\theta)^{n+p-q}}\max_{C_{\theta}}f^{-1}
    \end{equation}
    while $n+1-q\geq 0$ and $1-p\leq 0$, and
      \begin{equation}\label{a11}
        2^{-\frac{n+1-q}{2}}\frac{1}{(1-\cos\theta)^{q-1}}\min_{C_{\theta}}f^{-1}\leq h^{p-q} \leq 
        \frac{(\sin\theta)^{2(p-q)}}{(1-\cos\theta)^{n+p-q}}\max_{C_{\theta}}f^{-1}
    \end{equation}
    while $n+1-q\geq 0$ and $1-p>0$.
    \begin{equation}\label{a1-1}
        \frac{(1-\cos\theta)^{p-q}}{(\sin\theta)^{2(n+p-q)}}\min_{C_{\theta}}f^{-1}\leq h^{p-q} \leq 
        2^{-\frac{n+1-q}{2}}\frac{(\sin\theta)^{2(p-q)}}{(1-\cos\theta)^{p-1}}\max_{C_{\theta}}f^{-1}
    \end{equation}
    while $n+1-q<0$ and $1-p\leq 0$, and 
     \begin{equation}\label{a11}
        \frac{(1-\cos\theta)^{p-q}}{(\sin\theta)^{2(n+p-q)}}\min_{C_{\theta}}f^{-1}\leq h^{p-q} \leq 
        2^{-\frac{n+1-q}{2}}\frac{1}{(\sin\theta)^{2(q-1)}}\max_{C_{\theta}}f^{-1}
    \end{equation}
    while $n+1-q<0$ and $1-p>0$.
\end{lemma}

\begin{proof}
    Consider the capillary support function (see, e.g., \cite[Eq. (1.5)]{MWW25a})
    \begin{equation*}
        u:=l^{-1}h.
    \end{equation*}
Since $h$ is a solution to Eq.\eqref{Eq}, we know that $u$ satisfies
\begin{equation}\label{a2}
\left\{
\begin{aligned}
&\det\left(l\nabla^2 u + \cos\theta\cdot(\nabla u\otimes e^{T} + e^{T}\otimes \nabla u)+ u\sigma\right) \\ 
&\qquad\qquad\qquad\qquad=f(ul)^{p-1}\left((ul)^2 + |\nabla (ul)|^2\right)^{\frac{n+1-q}{2}}, ~~&&in~C_{\theta}\\
&\nabla_{\mu} u=0,  ~~&& on~\partial C_{\theta}
\end{aligned}
\right.
\end{equation}
where $e^{T}$ is the tangential part of $e$ on $C_{\theta}$, and we used that $\nabla_{\mu}l = \cot\theta\cdot l$ on $\partial C_{\theta}$.
We divide the proof into two cases: either $n+1-q\geq 0$ or $n+1-q<0$.

{\bf Case 1: } $n+1-q\geq 0$. Suppose $u$ attains the maximum value at some point $\xi_{0}\in C_{\theta}$. If $\xi_0 \in C_{\theta}\backslash\partial C_{\theta}$, then
\begin{equation}\label{a3}
    \nabla u(\xi_{0}) = 0 \qquad and \qquad \nabla^2 u(\xi_{0})\leq 0.
\end{equation}
If $\xi_0 \in \partial C_{\theta}$, the boundary condition in \eqref{a2} implies that \eqref{a3} still holds. Substituting \eqref{a3} into \eqref{a2} at $\xi_0$, we obtain
$$f(ul)^{n+p-q}\leq \det(l\nabla^2 u + \cos\theta(\nabla u \otimes e^{T} + e^{T}\otimes\nabla u)+u\sigma) \leq u^n,$$
which implies
$$u^{p-q}(\xi_0)\leq f^{-1}l^{q-n-p}\leq \frac{1}{\min_{C_{\theta}}f\cdot (1-\cos\theta)^{n+p-q}}.$$
Therefore, for all $\xi\in C_{\theta}$, by $(\sin\theta)^2\geq l\geq 1-\cos\theta$, we have
\begin{equation}\label{a4}
    \begin{split}
        h^{p-q}(\xi) &= (ul)^{p-q}(\xi) \\ 
        &\leq u^{p-q}(\xi_{0})\cdot\left(\max_{\xi\in C_{\theta}}l(\xi)\right)^{p-q} \\ 
        &\leq 
        \frac{(\sin\theta)^{2(p-q)}}{\min_{C_{\theta}}f\cdot (1-\cos\theta)^{n+p-q}}.
    \end{split}
\end{equation}
Similarly, if $u$ attains the minimum value at some point $\xi_1\in C_{\theta}$, there holds
\begin{equation}\label{a5}
        u^{p-q}(\xi_1) \geq 2^{-\frac{n+1-q}{2}}f^{-1}l^{1-p}.
\end{equation}
When $1-p\leq 0$, then we have
\begin{equation*}
    u^{p-q}(\xi_1) \geq
        2^{-\frac{n+1-q}{2}}\frac{1}{\max_{C_{\theta}}f\cdot (\sin\theta)^{2(p-1)}},
\end{equation*}
when $1-p>0$, then we have
\begin{equation*}
    u^{p-q}(\xi_1) \geq 2^{-\frac{n+1-q}{2}}\frac{1}{\max_{C_{\theta}}f\cdot (1-\cos\theta)^{p-1}}.
\end{equation*}
Therefore, for all $\xi\in C_{\theta}$, if $1-p\leq 0$, we have
\begin{equation}\label{a6}
    \begin{split}
        h^{p-q}(\xi) &= (ul)^{p-q}(\xi) \\ 
        &\geq 
        u^{p-q}(\xi_1)\left(\min_{\xi\in C_{\theta}}l(\xi)\right)^{p-q} \\ 
        &\geq 
        2^{-\frac{n+1-q}{2}}\frac{(1-\cos\theta)^{p-q}}{\max_{C_{\theta}}f\cdot (\sin\theta)^{2(p-1)}},
    \end{split}
\end{equation}
if $1-p>0$, we have
\begin{equation}\label{a9}
    \begin{split}
        h^{p-q}(\xi) &= (ul)^{p-q}(\xi) \\ 
        &\geq 
        u^{p-q}(\xi_1)\left(\min_{\xi\in C_{\theta}}l(\xi)\right)^{p-q} \\ 
        &\geq 
        2^{-\frac{n+1-q}{2}}\frac{1}{\max_{C_{\theta}}f\cdot (1-\cos\theta)^{q-1}}.
    \end{split}
\end{equation}

{\bf Case 2: } $n+1-q< 0$. Similarly, suppose $u$ attains the maximum value at some point $\xi_{0}\in C_{\theta}$. At $\xi_0$, we obtain
$$2^{\frac{n+1-q}{2}}fl^{p-1}u^{n+p-q}\leq \det(l\nabla^2 u + \cos\theta(\nabla u \otimes e^{T} + e^{T}\otimes\nabla u)+u\sigma) \leq u^n,$$
which implies
$$u^{p-q}(\xi_0)\leq 2^{-\frac{n+1-q}{2}}f^{-1}l^{1-p}.$$
When $1-p\leq 0$, we have
\begin{equation*}
    u^{p-q}(\xi_0)\leq 2^{-\frac{n+1-q}{2}}\frac{1}{\min_{C_{\theta}}f\cdot (1-\cos\theta)^{p-1}},
\end{equation*}
when $1-p>0$, we have
\begin{equation*}
    u^{p-q}(\xi_0)\leq 2^{-\frac{n+1-q}{2}} \frac{1}{\min_{C_{\theta}}f \cdot (\sin\theta)^{2(p-1)}}.
\end{equation*}
Therefore, for all $\xi\in C_{\theta}$, by $(\sin\theta)^2\geq l\geq 1-\cos\theta$, if $1-p\leq 0$, we have
\begin{equation}\label{a7}
    \begin{split}
        h^{p-q}(\xi) &= (ul)^{p-q}(\xi) \\ 
        &\leq u^{p-q}(\xi_{0})\cdot\left(\max_{\xi\in C_{\theta}}l(\xi)\right)^{p-q} \\ 
        &\leq 
        2^{-\frac{n+1-q}{2}}\frac{(\sin\theta)^{2(p-q)}}{\min_{C_{\theta}}f\cdot (1-\cos\theta)^{p-1}},
    \end{split}
\end{equation}
if $1-p>0$, we have
\begin{equation}\label{a10}
    \begin{split}
        h^{p-q}(\xi) &= (ul)^{p-q}(\xi) \\ 
        &\leq u^{p-q}(\xi_{0})\cdot\left(\max_{\xi\in C_{\theta}}l(\xi)\right)^{p-q} \\ 
        &\leq 
        2^{-\frac{n+1-q}{2}}\frac{1}{\min_{C_{\theta}}f\cdot (\sin\theta)^{2(q-1)}}.
    \end{split}
\end{equation}
Similarly, then there holds
\begin{equation}\label{a8}
        h^{p-q}(\xi)\geq \frac{(1-\cos\theta)^{p-q}}{\max_{C_{\theta}}f\cdot (\sin\theta)^{2(n+p-q)}}.
\end{equation}
This completes the proof.
\end{proof}

Next, we give the $C^1$-estimate, which is based solely on convexity and is independent of the specific equation.

\begin{lemma}[{\bf $C^1$ estimate}]\label{c1-1}
    Let $p>q$ and $\theta\in (0,\pi)$. Suppose $h$ is a positive solution to Eq. \eqref{Eq}, then there holds
    \begin{equation}\label{b1}
        \max_{C_{\theta}}|\nabla h| \leq (1+\cot^2\theta)^{1/2} \Vert h\Vert_{C^{0}(C_{\theta})}.
    \end{equation}
\end{lemma}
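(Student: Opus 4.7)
The plan is to apply the maximum principle to the auxiliary function $\phi := |\nabla h|^2 + h^2$ on $C_{\theta}$ and show that $\max_{C_{\theta}}\phi \leq (1+\cot^2\theta)\,\|h\|_{C^0(C_{\theta})}^2$; taking a square root then yields the stated inequality since $|\nabla h|^2 \leq \phi$ pointwise. The key computation is $\phi_k = 2(h_{jk}+h\delta_{jk})h_j = 2A_{jk}h_j$, where $A := \nabla^2 h+h\sigma$ is positive definite by the convexity assumption on the solution. Thus wherever $\nabla\phi$ (or a tangential part of it) must vanish, the positivity of $A$, or of a principal block of $A$, will force the corresponding components of $\nabla h$ to vanish. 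Note that the equation itself is never invoked, which is consistent with the lemma's claim that the estimate depends only on convexity.

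I would split the argument by the location of the maximizer $\xi_0$ of $\phi$. If $\xi_0 \in C_\theta\setminus\partial C_\theta$, then $\nabla\phi(\xi_0) = 0$ gives $A_{jk}h_j = 0$ for every $k$, so positive definiteness of $A$ forces $h_j(\xi_0) = 0$, whence $\max\phi = h^2(\xi_0) \leq \|h\|_{C^0}^2$, which is strictly stronger than required. If instead $\xi_0 \in \partial C_\theta$, the key observation is that $\partial C_\theta$ is itself a closed manifold (diffeomorphic to $\S^{n-1}$), so $\phi|_{\partial C_\theta}$ attains its maximum at an interior point of $\partial C_\theta$ and its tangential derivatives vanish there; this yields $A_{jk}h_j = 0$ only for $k = 1,\dots,n-1$, rather than for all $k$.

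The decisive step is to convert this partial vanishing into full control of the tangential part of $\nabla h$, and this is where Proposition \ref{pro} enters. From $h_{kn} = 0$ on $\partial C_\theta$ for $k<n$ we obtain $A_{kn} = h_{kn}+h\delta_{kn} = 0$ for $k<n$, so at any boundary point the matrix $A$ is block-diagonal, and the identity $A_{jk}h_j = 0$ for $k<n$ collapses to $A'h' = 0$, where $h' := (h_1,\dots,h_{n-1})$ and $A'$ is the upper-left $(n-1)\times(n-1)$ block of $A$. Since $A'$ is a principal submatrix of a positive-definite matrix it is itself positive definite, hence $h_j(\xi_0) = 0$ for $j<n$. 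Combined with the Robin boundary condition $h_n = \cot\theta\cdot h$, this gives $|\nabla h|^2(\xi_0) = h_n^2(\xi_0) = \cot^2\theta\cdot h^2(\xi_0)$, so $\phi(\xi_0) = (1+\cot^2\theta)\,h^2(\xi_0) \leq (1+\cot^2\theta)\,\|h\|_{C^0}^2$, as desired. The main subtlety, and the only place the geometry of the problem intervenes, is recognizing the block-diagonal structure at the boundary supplied by Proposition \ref{pro}; without it, the normal and tangential components of $\nabla h$ at a boundary maximizer cannot be decoupled.
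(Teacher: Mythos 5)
Your proposal is correct and follows essentially the same route as the paper: maximize $P=|\nabla h|^2+h^2$, use positivity of $A=\nabla^2h+h\sigma$ in the interior, and at a boundary maximizer use Proposition \ref{pro} together with the Robin condition $h_n=\cot\theta\, h$ to get $P(\xi_0)=(1+\cot^2\theta)h^2(\xi_0)$. In fact you spell out the one step the paper leaves terse, namely that $h_{kn}=0$ makes $A$ block-diagonal on $\partial C_{\theta}$ so the vanishing of the tangential derivatives of $P$ forces $h_k(\xi_0)=0$ via positive definiteness of the tangential block.
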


\begin{proof}
    We consider the function
    $$P:= |\nabla h|^2 + h^2.$$
    Suppose that the function $P$  attains its maximum value at some point $\xi_{0}\in C_{\theta}$. If $\xi_0 \in C_{\theta}\backslash\partial C_{\theta}$, we have
    $$0=\nabla_{e_i}P = 2h_{k}h_{ki} + 2hh_{i}, \quad for \quad 1\leq i\leq n.$$
Together with the convexity of $h$, i.e., $h_{ij}+h\delta_{ij}>0$, it follows $\nabla h(\xi_{0})=0$, so \eqref{b1} holds.

If $\xi_{0}\in \partial C_{\theta}$, we choose an orthonormal frame $\{e_i\}_{i=1}^{n}$ around $\xi_{0}\in\partial C_{\theta}$ such that $e_n = \mu$. From Proposition \ref{pro}, we know that
\begin{equation}\label{b2}
    h_{kn}=0 \qquad for~~any \quad 1\leq k\leq n-1.
\end{equation}
Then we have
$$0=\nabla_{e_k}P = 2\sum_{i=1}^{n}h_{i}h_{ik}+ 2hh_{k},$$
which implies
\begin{equation}\label{b3}
    h_{k}(\xi_{0}) = 0.
\end{equation}
Together with \eqref{b3} and $h_n = \cot\theta\cdot h$ on $\partial C_{\theta}$, we have
$$|\nabla h|^2(\xi_{0})\leq (|\nabla h|^2 + h^2)(\xi_0) = (h_n^2 + h^2)(\xi_{0}) = (1+\cot^2\theta)h^2(\xi_0).$$
In conclusion, \eqref{b1} holds.
\end{proof}

\section{$C^2$ estimates} \label{se4}
Now, we establish the priori $C^2$ estimate for the positive solution of Eq. \eqref{Eq}. 

\begin{lemma}[{\bf $C^2$ estimate}]\label{c2 estimate}
    Let $p>q$, $q=n+1$ or $q\leq 1$ and $\theta\in (0,\frac{\pi}{2})$. Suppose that $h$ is a positive solution to Eq. \eqref{Eq}, then there holds 
    \begin{equation}\label{d1}
        \max_{C_{\theta}}|\nabla^2 h| \leq C,
    \end{equation}
    where the positive constant $C$ depends on $n$, $p$, $q$, $\min_{C_{\theta}} f$, $\min_{C_{\theta}}h$, $\Vert f\Vert_{C^2 (C_{\theta})}$ and $\Vert h\Vert_{C^0 (C_{\theta})}$.
\end{lemma}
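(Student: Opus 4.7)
The plan is to bound from above the largest eigenvalue $\lambda_{\max}$ of the matrix $A := \nabla^2 h + h\sigma$. Since Lemmas \ref{C0 estimate}, \ref{c1-1}, and \ref{c1 estimate} already give positive upper and lower bounds for $h$ and an upper bound for $|\nabla h|$, the right-hand side $F := f h^{p-1}(h^2+|\nabla h|^2)^{(n+1-q)/2}$ of Eq.~\eqref{Eq} is pinched between two positive constants. Consequently, an upper bound on $\lambda_{\max}(A)$ automatically forces a positive lower bound on $\lambda_{\min}(A)$ via $\det A = F$, yielding the full $C^2$ bound.

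To derive the upper bound I would consider a test function of the form
$$W(\xi) := \log \lambda_{\max}(A(\xi)) + \frac{\alpha}{2}|\nabla h|^2(\xi) - \beta\log h(\xi),$$
with constants $\alpha,\beta>0$ to be determined, and examine where the maximum of $W$ is attained. If the maximum occurs at an interior point $\xi_0$, I would work in an orthonormal frame diagonalizing $A$ at $\xi_0$ (so that $\lambda_{\max}=A_{11}$) and replace $\log \lambda_{\max}$ locally by $\log A_{\eta\eta}$ for a fixed eigenvector $\eta$ realizing the maximum. Linearizing the equation as $G^{ij} := \partial \log\det A / \partial A_{ij}$, combining $0 \geq G^{ij}W_{ij}$ with $\log\det A = \log F$ differentiated twice, and applying the commutator identity for iterated covariant derivatives on $\S^n$ to exchange $h_{11ij}$ with $h_{ij11}$ up to curvature terms, one obtains a differential inequality in $A_{11}$ that, thanks to the $C^0$ and $C^1$ bounds and the uniform positivity of $F$, forces $A_{11}(\xi_0)\le C$.

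The more delicate case is when the maximum of $W$ is attained at $\xi_0\in\partial C_\theta$. Choose an orthonormal frame with $e_n = \mu$ near $\xi_0$. By Proposition \ref{pro}, $h_{kn}(\xi_0) = 0$ for every $k < n$, hence $A_{kn}(\xi_0) = 0$, so $A$ is block-diagonal at $\xi_0$: either $\lambda_{\max} = A_{nn}(\xi_0)$ or $\lambda_{\max}$ is an eigenvalue of the tangential block $(A_{ij})_{i,j<n}$. In either case, the Hopf lemma requires $\nabla_\mu W(\xi_0) \geq 0$. I would compute $\nabla_\mu W$ using Gauss-Weingarten on $\partial C_\theta\subset C_\theta$ (with $\nabla_{e_k}e_n = \cot\theta\,e_k$), the Robin boundary condition $h_n = \cot\theta\cdot h$ and its tangential derivatives, and Proposition \ref{pro}; for $\alpha$ small and $\beta$ appropriately chosen, the negative contribution from $-\beta h_n/h = -\beta\cot\theta$ together with the controlled sign of $\alpha h_k h_{kn}$ should dominate, yielding $\nabla_\mu W(\xi_0)<0$ and hence a contradiction.

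The principal obstacle is the boundary analysis. If $\lambda_{\max}$ corresponds to the normal direction $e_n$, then $\nabla_\mu \log A_{nn}$ involves $h_{nnn}$, requiring the Robin condition to be differentiated twice in $\mu$; if instead $\lambda_{\max}$ is tangential, one must control $\nabla_\mu A_{kk}$ for $k<n$, which forces one to differentiate the identity of Proposition \ref{pro} normally and to exploit the sign structure afforded by $\theta<\pi/2$. Balancing $\alpha$ and $\beta$ so that the competing boundary terms cooperate — and, in particular, using that the geodesic curvature of $\partial C_\theta$ in $C_\theta$ equals $\cot\theta$, which is positive precisely when $\theta<\pi/2$ — is the finicky part of the argument, and explains the restriction on $\theta$ in the statement.
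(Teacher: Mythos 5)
There is a genuine gap at the boundary, and it sits exactly where you flag ``the finicky part.'' The Robin condition $h_n=\cot\theta\, h$ holds only \emph{on} $\partial C_\theta$, so it can be differentiated tangentially but never in the direction $\mu$. Consequently, when the maximum of your $W$ is attained at $\xi_0\in\partial C_\theta$, the Hopf-lemma inequality $\nabla_\mu W(\xi_0)\geq 0$ contains terms you cannot control by any choice of $\alpha,\beta$: if the maximal eigendirection is normal, $\nabla_\mu\log A_{nn}$ produces $h_{nnn}$, about which the boundary condition says nothing; and even if it is tangential, commuting derivatives gives (as in the paper) $\nabla_{n11}h=\cot\theta\,(h_{nn}-h_{11}-h)$, so the inequality only compares the tangential second derivative with the \emph{double normal} one $h_{nn}$, which is itself not yet estimated. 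So the boundary case of your argument does not close; at best it reduces the tangential bound to a bound on $h_{\mu\mu}$ on $\partial C_\theta$, and that quantity needs a separate argument which your proposal does not supply.

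This is precisely how the paper proceeds: Step 1 uses the test function $P(\xi,\Xi)=\nabla^2h(\Xi,\Xi)+L|\nabla h|^2$ and, at boundary points, splits $\Xi_0$ into tangential and non-tangential parts to prove only the reduction $\max_{C_\theta}|\nabla^2h|\leq C\max_{\partial C_\theta}|\nabla^2h(\mu,\mu)|+C$; Step 2 then establishes the missing double normal estimate by introducing $\zeta=e^{-d_{\partial C_\theta}}-1$ and the auxiliary function $Q=\langle\nabla h,\nabla\zeta\rangle-\cot\theta\, h$, which vanishes on $\partial C_\theta$, satisfies $|F^{ij}Q_{ij}-g_{h_i}Q_i|\leq C\,\mathrm{tr}(F^{ij})$, and is then handled by the boundary gradient estimate of Gilbarg--Trudinger (Corollary 14.5), giving $\sup_{\partial C_\theta}|\nabla Q|\leq C$ and hence $|h_{\mu\mu}|\leq C$ on $\partial C_\theta$. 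Your interior computation (Pogorelov-type maximum principle with the linearization $G^{ij}$, commutator identities, and the $C^0$/$C^1$ bounds plus uniform positivity of the right-hand side) is in the same spirit as the paper's interior Case 1 and is fine in outline; what is missing is the key idea of the boundary double-normal estimate via an auxiliary function of Guan type (or an equivalent barrier construction), without which the restriction $\theta<\pi/2$ and the sign of $\cot\theta$ alone cannot rescue the Hopf-lemma balancing you describe.
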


\begin{proof}
We consider the function 
    \begin{equation*}
        P(\xi):=\sigma_1 + \frac{1}{2}|\nabla h|^2 + M h
    \end{equation*}
    for $\xi\in C_{\theta}$, where $\sigma_1$ denote the trace of matrix $(A_{ij}):= (h_{ij} + h\delta_{ij})$, $M$ is negative constant to be chosen later. Suppose that $P$ attains its maximum value at some point $\xi_{0}\in C_{\theta}$. We divide this proof into two cases: either $\xi_0\in \partial C_{\theta}$ or $\xi_0 \in C_{\theta}\backslash\partial C_{\theta}$.

    {\bf Case 1.} $\xi_0 \in C_{\theta}\backslash\partial C_{\theta}$. In this case, we choose an orthonormal frame $\{e_i\}_{i=1}^n$ around $\xi_0$, such that $A_{ij} = (h_{ij} + h\delta_{ij})$ is diagonal, hence $\frac{\partial F(A)}{\partial A_{ij}}$ and $h_{ij}$ are also diagonal at $\xi_0$. Denote
    \begin{equation}\label{d3}
        F(A) := \log\det (A) =\log\left( h^{p-1}(h^2 + |\nabla h|^2)^{\frac{n+1-q}{2}}f\right) = :\tilde{f},
    \end{equation}
    and 
    $$F^{ij}:= \frac{\partial F(A)}{\partial A_{ij}}, ~~ F^{ij,kl}:=\frac{\partial^2 F(A)}{\partial A_{ij}\partial A_{kl}}.$$
    Then we have 
    \begin{equation}\label{d4}   
        F^{ij} = A^{ij},
    \end{equation}
    \begin{equation}\label{d5}
        F^{ij,kl} = -F^{ik}F^{jl} = - A^{ik} A^{jl},
    \end{equation}
    where $A^{ij}$ is the inverse of $A_{ij}$.
Note that
\begin{equation}\label{d6}
    F^{ij}A_{ij} =  n .
\end{equation}
Using the fact
$$h_{klij} = h_{ijkl} + 2h_{kl}\delta_{ij} - 2h_{ij}\delta_{kl} + h_{li}\delta_{kj} - h_{kj}\delta_{il},$$
then we have 
\begin{equation}\label{d7}
    \begin{split}
        F^{ij}(\sigma_1)_{ij} &= \sum_k F^{ij}A_{kkij} = \sum_k F^{ij}(h_{kkij}+ h_{ij}) \\ 
        &=
          \sum_k F^{ij}(A_{ijkk} + h_{kk}\delta_{ij} - h_{ij})\\ 
        &= -\sum_k F^{ij,mn}A_{ijk}A_{mnk} + \Delta \tilde{f} + \sigma_1\sum_i F^{ii} -n \\
        &=
        \sum_k F^{im}F^{jn}A_{ijk}A_{mnk}+\Delta \tilde{f} + \sigma_1\sum_i F^{ii} -n \\
        &= 
        \sum_k F^{ii}F^{jj}A_{ijk}^2 + \Delta \tilde{f} + \sigma_1\sum_i F^{ii} -n\\
        &\geq \frac{1}{n} \sum_k (\tilde{f})^2_k + \Delta\tilde{f} + \sigma_1 \sum_i F^{ii} - n.
    \end{split}
\end{equation}
By the directly computation, we know that
\begin{equation}\label{d8}
    \tilde{f} = (p-1)\log h + \frac{n+1-q}{2}\log (h^2 + |\nabla h|^2) + \log f,
\end{equation}

\begin{equation}\label{d9}
   \tilde{f}_i = (p-1)\frac{h_i}{h} + (n+1-q)\frac{h_i A_{ii}}{h^2 + |\nabla h|^2} + (\log f)_i,
\end{equation}
so
\begin{equation}\label{d10}
\begin{split}
    (\tilde{f}_i)^2 &= (n+1-q)^2 \frac{h_i^2 A_{ii}^2}{(h^2 + |\nabla h|^2)^2}  \\  
      &\qquad 
     + \left(2(p-1)(n+1-q) \frac{h_i}{h}+ 2(n+1-q)(\log f)_i\right)\frac{h_i A_{ii}}{h^2 + |\nabla h|^2} \\ 
      &\qquad 
    +(p-1)^2 \frac{h_i^2}{h^2} + 2(p-1)(\log f)_i\frac{h_i}{h} + (\log f)_i^2 ,
    \end{split}
\end{equation}
we also have
\begin{equation}\label{d11}
\begin{split}
    \tilde{f}_{ii} &= (p-1)\frac{h_{ii}}{h} - (p-1)\frac{h^2_i}{h^2} + (\log f)_{ii} \\ 
    &\qquad 
    +(n+1-q)\frac{h_i^2 + h h_{ii} + h_{ii}^2+ \sum_{k}h_k h_{kii}}{h^2 + |\nabla h|^2} - 2(n+1-q)\frac{h_i^2 A_{ii}^2}{(h^2 + |\nabla h|^2)^2}.
\end{split}
\end{equation}
Because the text function $P$ attains its maximum value in $\xi_0\in C_{\theta}$, we have
\begin{equation*}
(\sigma_1)_i =  - h_i h_{ii} - Mh_i,
\end{equation*}
and 
\begin{equation}\label{d14}
    \begin{split}
        0&\geq P_{ij} = (\sigma_1)_{ij} + h_{ii}h_{jj} + h_k h_{kij} + Mh_{ij} \\
        &=
         (\sigma_1)_{ij} + h_{ii}h_{jj} + A_{ijk}h_k - h_i h_j + M h_{ij}\\
    \end{split}
\end{equation}
for the standard metric on spherical cap $C_{\theta}$, using the commutator formula $h_{kij} = h_{ijk} + h_k \delta_{ij} - h_j \delta_{ki}$, then we have
\begin{equation}\label{d12}
\begin{split}
\tilde{f}_{ii} &= (p-1)\frac{h_{ii}}{h} - (p-1)\frac{h_i^2}{h^2} + (\log f)_{ii} \\ 
&\qquad 
+(n+1-q)\frac{h_{ii}^2 + hh_{ii} +  \sum_{k}h_k A_{iik} }{h^2 + |\nabla h|^2} - 2(n+1-q)\frac{h_i^2 A_{ii}^2}{(h^2 + |\nabla h|^2)^2}\\
&= 
(n+1-q)\frac{A_{ii}^2}{h^2 + |\nabla h|^2} - 2(n+1-q)\frac{h_i^2 A_{ii}^2}{(h^2 + |\nabla h|^2)^2}+ (n+1-q)\frac{\sum_k h_k A_{iik}}{h^2 + |\nabla h|^2} \\ 
&\qquad 
-(n+1-q)\frac{hA_{ii}}{h^2 + |\nabla h|^2} + (p-1)\frac{A_{ii}}{h} - (p-1)\frac{h_i^2}{h^2} - (p-1) + (\log f)_{ii}.
\end{split}
\end{equation}
So we obtain that
\begin{equation}
    \begin{split}
        \Delta \tilde{f} &= 
        \frac{n+1-q}{h^2 + |\nabla h|^2}\sum_i \left(  1-\frac{2h_i^2}{h^2 + |\nabla h|^2} \right)A_{ii}^2 - (n+1-q) \frac{\sum_i (h_i^2+ h) A_{ii}}{h^2 + |\nabla h|^2}+ \frac{(p-1)\sigma_1}{h}\\
        &\qquad 
        + (n+1-q)(h-M)\frac{|\nabla h|^2}{h^2 + |\nabla h|^2}-(p-1)\frac{|\nabla h|^2}{h^2} - (p-1) + \Delta(\log f).\\
    \end{split}
\end{equation}
From the arithmetric-geometric mean inequality, we have
\begin{equation}\label{d15}
    \sum_{i=1}^n F^{ii} \geq n(\det A)^{-\frac{1}{n}}.
\end{equation}
Combining \eqref{d6}, \eqref{d7}, \eqref{d14} and \eqref{d15}, then we have
\begin{equation}\label{d13}
\begin{split}
    0&\geq F^{ij}P_{ij} = 
     F^{ij}(\sigma_1)_{ij}+ \sigma_1 + h^2\sum_i F^{ii} - 2nh \\
     &\qquad 
     +\sum_i \tilde{f}_i h_i - F^{ii}h_i^2 + nM - Mh\sum_i F^{ii} \\
     &\geq 
     +\frac{1}{n} \sum_i (\tilde{f}_i)^2+\Delta\tilde{f} + \sigma_1  \sum_i F^{ii}+ \sigma_1 + (h^2-Mh) \sum_i F^{ii} - \sum_i F^{ii}h_i^2\\
      &\qquad
      +\sum_i \tilde{f}_i h_i -2nh+ nM- n\\
      &\geq
      \frac{n+1-q}{(h^2 + |\nabla h|^2)^2}\sum_i \left(  \frac{n+1-q}{n}h_i^2 + h^2 + |\nabla h|^2 -2h_i^2 \right)A_{ii}^2\\
      &\qquad 
       +\frac{1}{n}\left( 2(p-1)(n+1-q)\frac{h_i}{h} + 2(n+1-q)(\log f)_i \right)\frac{h_i A_{ii}}{h^2 + |\nabla h|^2} \\
      &\qquad 
      -(n+1-q) \frac{ h}{h^2 + |\nabla h|^2}\sigma_1+ \sigma_1 \sum_i F^{ii} + \sigma_1+ \frac{p-1}{h}\sigma_1\\
      &\qquad 
       +\sum_i(h^2 - h_i^2-Mh) F^{ii} + (n+1-q)(h-M)\frac{|\nabla h|^2}{h^2 + |\nabla h|^2}\\
       &\qquad 
       + \left(\frac{(p-1)^2}{n}-(p-1) + (p-1)h\right)\frac{|\nabla h|^2}{h^2} + \left(\frac{2(p-1)}{nh} +1\right)\sum_i h_i (\log f)_i\\
       &\qquad 
       + \frac{1}{n}\sum_i(\log f)_i^2-2nh+ \Delta(\log f)-(p-1)+n(M-1).
\end{split}
\end{equation}

Next, we divide it into two subcases: $q=n+1$ and $q\leq 1$.

{\bf Subcase 1.1}. When $q=n+1$, choosing $M:=-\frac{(1+\cot^2\theta)\Vert h\Vert^2_{C^0 (C_{\theta})}}{\min_{C_{\theta}}h}$, \eqref{d13} degenerates into
\begin{equation}\label{d16}
    \begin{split}
        0&\geq \frac{n}{h} \sigma_1 +  \left( h^2 - |\nabla h|^2 - Mh \right)\sum_i F^{ii} -\left( \frac{2(p-1)}{nh} + 1 \right)|\nabla h| |\nabla (\log f)| \\ 
        &\qquad 
         - 2nh  + \Delta(\log f) - (p-1) + n(M-1)\\
         &\geq 
        \frac{n}{h} \sigma_1  -\left( \frac{2(p-1)}{nh} + 1 \right)|\nabla h| |\nabla (\log f)|  - 2nh  + \Delta(\log f) - (p-1) + n(M-1).
    \end{split}
\end{equation}
Together with Lemma \ref{C0 estimate} and Lemma \ref{c1-1}, we conclude that 
$$\sigma_1 (\xi_0) \leq C,$$
where the positive constant $C$ depends on $n$, $p$, $\min_{C_{\theta}}f$, $\min_{C_{\theta}}h$, $\Vert f\Vert_{C^2 (C_{\theta})} $ and $\Vert h\Vert_{C^0 (C_{\theta})}$.

{\bf Subcase 1.2}. When $q\leq 1$, also choosing $M:=-\frac{(1+\cot^2\theta)\Vert h\Vert^2_{C^0 (C_{\theta})}}{\min_{C_{\theta}}h}$, \eqref{d13} degenerates into 
\begin{equation}\label{d17}
    \begin{split}
        0&\geq 
        \frac{h^2 }{(h^2 + |\nabla h|^2)^2}\sigma_1^2 -c_1 \sigma_1 + (h^2 - |\nabla h|^2-Mh)\sum_i F^{ii}- c_2 \\ 
        &
        \geq \frac{h^2 }{(h^2 + |\nabla h|^2)^2}\sigma_1^2 -c_1 \sigma_1 - c_2 ,
    \end{split}
\end{equation}
where the positive constant $c_1$ depends on $n$, $p$, $q$, $\min_{C_{\theta}}f$, $\min_{C_{\theta}}h$, $\Vert f\Vert_{C^1(C_{\theta})}$ and $\Vert h\Vert_{C^0 (C_{\theta})}$, the positive constant $c_2$ depends on $n$, $p$, $q$, $\min_{C_{\theta}}f$, $\min_{C_{\theta}}h$, $\Vert f\Vert_{C^2(C_{\theta})}$ and $\Vert h\Vert_{C^0 (C_{\theta})}$. So, we conclude that 
$$\sigma_1 (\xi_0)\leq C,$$
where the positive constant $C$ depends on $n$, $p$, $q$, $\min_{C_{\theta}}f$, $\min_{C_{\theta}}h$, $\Vert f\Vert_{C^2 (C_{\theta})} $ and $\Vert h\Vert_{C^0 (C_{\theta})}$.

{\bf Case 2.} $\xi_0 \in \partial C_{\theta}$. We choose an orthonormal frame $\{e_i\}_{i=1}^n$ around $\xi_0\in \partial C_{\theta}$ satisfying $e_n = \mu$ at $\xi_0$ and $A_{11}\leq A_{22}\leq \cdots \leq A_{(n-1)(n-1)}$. Then at $\xi_0$ we have
\begin{equation}\label{d27}
    \begin{split}
        0\leq F^{nn}P_n &=(n+1-q)\cot\theta\frac{h}{h^2 + |\nabla h|^2}A_{nn}+\cot\theta \sum_i \left(F^{nn}- F^{ii} \right)\left( A_{nn}-A_{ii} \right)  \\
        &\quad 
       - \cot\theta h^2 F^{nn} + M\cot\theta h F^{nn} +\cot\theta h+ (p-1)\cot\theta  + (\log f)_n \\
       &=
       (n+1-q)\cot\theta\frac{h}{h^2 + |\nabla h|^2}A_{nn}+ \cot\theta \left( 2n - F^{nn}\sigma_1 -A_{nn}\sum_i F^{ii}\right) \\
       &\quad 
              - \cot\theta h^2 F^{nn} + M\cot\theta h F^{nn} +\cot\theta h+ (p-1)\cot\theta  + (\log f)_n\\
    &\leq 
    (n+1-q)\cot\theta\frac{h}{h^2 + |\nabla h|^2}A_{nn}+ \cot\theta \left( 2n - F^{nn}\sigma_1 -A_{nn}\sum_i F^{ii}\right)\\
    &\quad 
    +\cot\theta h + |p-1|\cot\theta + |\nabla_n (\log f)|.\\
    \end{split}
\end{equation}
We assume that $A_{nn}$ large enough, otherwise, the conclusion follows. We know that 
\begin{equation*}
    A_{11}^{n-1}\leq \frac{\det A}{A_{nn}}\leq \frac{c_3}{A_{nn}},
\end{equation*}
where the positive constant $c_3$ depends on $n$, $p$, $q$, $\Vert f\Vert_{C^0(C_{\theta})}$ and $\Vert h\Vert_{C^0 (C_{\theta})}$.
So 
\begin{equation}\label{d28}
    A_{11}\leq \frac{c_4}{A_{nn}^{\frac{1}{n-1}}},
\end{equation}
where the positive constant $c_4$ depends on $n$, $p$, $q$, $\Vert f\Vert_{C^0(C_{\theta})}$ and $\Vert h\Vert_{C^0 (C_{\theta})}$.
Therefore, 
\begin{equation}\label{d29}
    \sum_i F^{ii} \geq \frac{1}{A_{11}} \geq c_4^{-1} A_{nn}^{\frac{1}{n-1}}.
\end{equation}
Inserting \eqref{d29} into \eqref{d27}, we get 
\begin{equation}\label{d30}
    \begin{split}
        0&\leq 
      \cot\theta\left\{  (n+1-q) \frac{ h}{h^2 + |\nabla h|^2}- c_4^{-1}A_{nn}^{\frac{1}{n-1}} \right\}A_{nn} -\cot\theta F^{nn}\sigma_1\\
      &\quad 
      + 2n\cot\theta  +h\cot\theta + |p-1|\cot\theta + |\nabla_n (\log f)| \\ 
      &\leq 
      -\cot\theta F^{nn}\sigma_1 + c_5,\\
    \end{split}
\end{equation}
 where the positive constant $c_5$ depends on $n$, $p$, $\min_{C_{\theta}}f$, $\min_{C_{\theta}}h$, $\Vert f\Vert_{C^1(C_{\theta})}$ and $\Vert h\Vert_{C^0 (C_{\theta})}$. Therefore, we have
\begin{equation}\label{d31}
        \sigma_1(\xi_0) \leq  \frac{c_5}{\cot\theta}A_{nn}.
\end{equation}
Next, we show that $A_{nn}$ cannot be large enough. Combining \eqref{d27} and \eqref{d29}, we have 
\begin{equation}\label{d32}
    0\leq (n+1-q)\cot\theta \frac{h}{h^2 + |\nabla h|^2}A_{nn} - c_4^{-1}\cot\theta A_{nn}^{1+\frac{1}{n-1}} + c_5,
\end{equation}
 then we have
 \begin{equation}\label{d33}
     A_{nn}\leq c_6,
 \end{equation}
 where the positive constant $c_6$ depends on $n$, $p$, $q$, $\min_{C_{\theta}}h$, $\Vert f\Vert_{C^0(C_{\theta})}$ and $\Vert h\Vert_{C^0 (C_{\theta})}$.
Hence, $A_{nn}$ cannot be large enough and $\sigma_1$ is bounded above.

\end{proof}

Based on the above priori estimates, then we have
\begin{theorem}\label{c2 theorem}
    Let $p>q$, $q\leq 1$ and $\theta\in (0,\frac{\pi}{2})$. Suppose that $h$ is a solution to Eq. \eqref{Eq}, then there holds
    \begin{equation}\label{g1}
        \min_{C_{\theta}} h \geq c,
    \end{equation}
    and for any $\alpha\in (0,1)$,
    \begin{equation}\label{g2}
        \Vert h\Vert_{C^{3,\alpha}(C_{\theta})}\leq C,
    \end{equation}
    where the positive constant $c$, $C$ depend only on $n$, $p$, $q$ and $f$. 
\end{theorem}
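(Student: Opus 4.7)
The plan is to assemble the prior estimates into a uniform ellipticity statement for equation \eqref{Eq}, then invoke Evans-Krylov type boundary regularity for the oblique Monge-Amp\`ere problem to bootstrap up to $C^{3,\alpha}$. First, the lower bound \eqref{g1} is immediate: Lemma \ref{C0 estimate} gives a positive lower bound on $h$ (both \eqref{a1} in the case $n+1-q\geq 0$ and \eqref{a1-1} in the case $n+1-q<0$) depending only on $n,p,q,\theta$ and $f$, so one sets $c$ to be that constant.

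For \eqref{g2} the strategy is to first establish that equation \eqref{Eq} is uniformly elliptic on $h$. Lemmas \ref{C0 estimate}, \ref{c1-1}, and \ref{c2 estimate} provide uniform control of $h$, $|\nabla h|$, and $|\nabla^2 h|$ in terms of the data. In particular $A=\nabla^2 h+h\sigma$ satisfies $A\leq C\,\sigma$ for a uniform $C$. On the other hand, since $h$ and $|\nabla h|$ are controlled from above and below, the right-hand side
\[
f\,h^{p-1}(h^2+|\nabla h|^2)^{\frac{n+1-q}{2}}
\]
is bounded above and below by positive constants depending only on the data. Combining the upper bound on all eigenvalues of $A$ with the lower bound on $\det A$ yields a uniform positive lower bound on the smallest eigenvalue of $A$, so there exist $0<\lambda\leq \Lambda<\infty$ depending only on $n,p,q,\theta,f$ with $\lambda\,\sigma\leq A\leq \Lambda\,\sigma$.

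With uniform ellipticity in hand, the operator $F(A)=\log\det A$ is concave in $A$ and the boundary condition $\nabla_\mu h=\cot\theta\cdot h$ is linear and strictly oblique. I would then apply the boundary Evans-Krylov / Krylov-Safonov regularity theorem for fully nonlinear concave uniformly elliptic equations with oblique derivative boundary conditions (as developed, e.g., by Lieberman-Trudinger and in the form convenient for Monge-Amp\`ere type equations) to the equation on the spherical cap $C_\theta$. This produces an a priori estimate
\[
\Vert h\Vert_{C^{2,\alpha}(C_\theta)}\leq C
\]
for some $\alpha\in(0,1)$, with $C$ depending only on $n,p,q,\theta,f$.

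Once we have $C^{2,\alpha}$, promoting to $C^{3,\alpha}$ is standard linear theory: differentiating \eqref{Eq} in a tangential direction $e_k$ gives a linear elliptic equation
\[
A^{ij}\nabla_{ij}(h_k)=\Psi_k(\xi,h,\nabla h,\nabla^2 h)
\]
with $C^\alpha$ coefficients and $C^\alpha$ right-hand side, together with the linear oblique condition obtained from differentiating $\nabla_\mu h=\cot\theta\cdot h$ tangentially along $\partial C_\theta$. Linear Schauder theory for oblique problems then yields $\Vert \nabla h\Vert_{C^{2,\alpha}}\leq C$, which is the desired \eqref{g2} for any $\alpha\in(0,1)$ (after reducing $\alpha$ slightly if necessary).

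The main obstacle is the boundary $C^{2,\alpha}$ step: interior Evans-Krylov is classical, but one must carefully invoke the correct boundary version for concave Monge-Amp\`ere-type equations with a Robin/oblique condition on a curved boundary $\partial C_\theta$; verifying the obliqueness constant and that the boundary data is compatible with uniform ellipticity here is the only nontrivial ingredient, and it is made possible precisely because $\theta\in(0,\pi/2)$ forces $\cot\theta>0$ together with the already-established uniform bounds.
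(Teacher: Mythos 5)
Your proposal is correct and follows essentially the same route as the paper: lower bound on $h$ from Lemma \ref{C0 estimate} (using $p>q$), uniform ellipticity from the $C^0$, $C^1$, $C^2$ estimates, and then the standard fully nonlinear (Evans--Krylov type, concave operator, oblique boundary condition) plus linear Schauder theory to reach $C^{3,\alpha}$ — the paper simply compresses this last step into a one-line citation of "the theory of fully nonlinear second-order uniformly elliptic equations and the Schauder estimate," which your write-up fleshes out.
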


\begin{proof}
    Combining Lemma \ref{C0 estimate}, Lemma \ref{c1-1} and Lemma \ref{c2 estimate}, we obtain
    \begin{equation*}
        c\leq \min_{C_{\theta}}h,\qquad and \qquad \Vert h\Vert_{C^2(C_{\theta})}\leq C.
    \end{equation*}
    By the theory of fully nonlinear second-order uniformly elliptic equations and the Schauder estimate we have \eqref{g2}.
\end{proof}

\section{Proof of Theorem 1.1} \label{se5}
In this section, we use the continuity method to complete the proof of Theorem 1.1. Let 
\begin{equation*}
    f_t : = (1-t)l^{1-p}\left(l^2 + |\nabla l|^2\right)^{\frac{q-n-1}{2}} + tf, \qquad for \quad 0\leq t\leq 1.
\end{equation*}
Consider the following family of equations
    \begin{equation}\label{h1}
\left\{
\begin{aligned}
&\det\left(\nabla^2 h  + h\sigma\right) = h^{p-1}\left(h^2 + |\nabla h|^2\right)^{\frac{n+1-q}{2}}f_t, ~~&&in~C_{\theta}\\
&\nabla_{\mu} h=\cot\theta h,  ~~&& on~\partial C_{\theta}
\end{aligned}
\right.
\end{equation}
Define the set
\begin{equation*}
    \mathcal{H} :=\{ h\in C^{4,\alpha}(C_{\theta}) | \nabla_{\mu}h = \cot\theta h~~on~~\partial C_{\theta}\},
\end{equation*}
and
\begin{equation*}
    \mathcal{I} := \{t\in [0,1] | \rm{Eq. ~\eqref{h1}~has~a~positive~solution~in~\mathcal{H}}\}.
\end{equation*}
For notational simplicity, define
\begin{equation}\label{h2}
    \mathcal{G}(h_{ij},h) := \det(A),
\end{equation}
and 
\begin{equation}\label{h3}
    \mathcal{J}_t(\nabla h, h) : =h^{p-1}(h^2 + |\nabla h|^2)^{\frac{n+1-q}{2}}f_t. 
\end{equation}
We rewrite Eq. \eqref{Eq} as 
    \begin{equation}\label{REq}
\left\{
\begin{aligned}
&\mathcal{G}(h_{ij},h) = \mathcal{J}_1 (\nabla h,h), ~~&&in~C_{\theta}\\
&\nabla_{\mu} h=\cot\theta h,  ~~&& on~\partial C_{\theta}
\end{aligned}
\right.
\end{equation}

In order to prove Theorem \ref{main1.1}, we first show the uniqueness part. 

\begin{lemma}\label{uniqueness}
    Let $p>q$ and $\theta\in (0,\frac{\pi}{2})$, suppose $h^1$, $h^2\in C^{2,\alpha}(C_{\theta})$ are solutions to the Eq. \eqref{REq}. Then $h^1\equiv h^2$.
\end{lemma}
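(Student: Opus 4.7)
The plan is a maximum principle / comparison argument based on scaling by the maximum of the ratio $h^1/h^2$. Assume toward contradiction that $h^1\not\equiv h^2$; set $\lambda:=\max_{C_\theta}(h^1/h^2)$, attained at some $\xi_0\in C_\theta$, and after possibly exchanging the roles of $h^1,h^2$ assume $\lambda>1$. Define $\tilde h:=\lambda h^2$. Scalar multiplication preserves the Robin condition, so $\nabla_\mu \tilde h=\cot\theta\,\tilde h$ on $\partial C_\theta$, and direct substitution into \eqref{Eq} shows that $\tilde h$ solves the same Monge--Amp\`ere equation with $f$ replaced by $\lambda^{q-p}f$; since $p>q$ and $\lambda>1$ one has $\lambda^{q-p}<1$, so $\tilde h$ solves a strictly ``smaller'' equation. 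Let $w:=\tilde h-h^1$. Then $w\ge 0$ on $C_\theta$, $w(\xi_0)=0$, and the Robin conditions give $\nabla_\mu w(\xi_0)=\cot\theta\,w(\xi_0)=0$.

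If $\xi_0\in C_\theta\backslash\partial C_\theta$, then $\nabla w(\xi_0)=0$ and $\nabla^2 w(\xi_0)\ge 0$, so $\nabla^2\tilde h+\tilde h\sigma\ge \nabla^2 h^1+h^1\sigma$ at $\xi_0$, with both matrices positive-definite by convexity; taking determinants and using the two scaled PDEs at $\xi_0$, where $\tilde h$ and $\nabla\tilde h$ coincide with $h^1$ and $\nabla h^1$, yields
\[
  1\le \lambda^{q-p}<1,
\]
a contradiction. For the boundary case $\xi_0\in\partial C_\theta$, I would linearize the Monge--Amp\`ere operator along the segment $h^s:=sh^1+(1-s)\tilde h$: each $h^s$ is a convex support function, so $A^s:=\nabla^2 h^s+h^s\sigma>0$, and the fundamental theorem of calculus produces a uniformly elliptic linear operator $Lw:=a^{ij}w_{ij}+b^iw_i+cw$ with $(a^{ij})>0$ coming from the averaged cofactor matrices of $A^s$, satisfying
\[
  Lw=(\lambda^{q-p}-1)\,f\,\tilde h^{p-1}\bigl(\tilde h^2+|\nabla\tilde h|^2\bigr)^{(n+1-q)/2}<0\quad\text{in } C_\theta.
\]
Since $w$ attains its minimum $0$ at $\xi_0$ with $\nabla_\mu w(\xi_0)=0$, Hopf's boundary point lemma forces $w\equiv 0$ in a neighborhood of $\xi_0$; but then $\tilde h\equiv h^1$ locally, and comparing the two scaled PDEs gives $\lambda^{q-p}=1$, i.e., $\lambda=1$, again a contradiction. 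Applying the symmetric argument to $\min(h^1/h^2)$ yields $h^1\equiv h^2$.

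The routine part is the interior comparison; the main obstacle is making the Hopf step of the boundary argument rigorous. This requires (i) verifying that the averaged linearization remains uniformly elliptic, for which convexity of each $h^s$ along the segment is essential, and (ii) handling the possibly wrong sign of the zero-order coefficient $c$ in $L$, which can be arranged by the standard substitution $w\mapsto e^{-\alpha d}w$ with $d$ a smooth function vanishing on $\partial C_\theta$ and $\alpha$ large. The Robin condition is precisely compatible with the Neumann-type Hopf lemma at the zero of $w$; the assumption $\theta\in(0,\pi/2)$ does not play an essential role in uniqueness.
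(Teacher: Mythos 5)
Your proof is correct and follows essentially the same route as the paper: scale one solution by a constant $\ge 1$ so that it touches the other from one side, use homogeneity and $p>q$ to compare the two equations, and conclude via a maximum-principle argument that the touching forces the constant to equal $1$. The only cosmetic difference is that you treat an interior touching point by a direct determinant comparison and a boundary touching point by Hopf's lemma, whereas the paper applies the integrated linearization and the strong maximum principle uniformly and then extracts $m=1$ from the homogeneity of $\mathcal{G}$ and $\mathcal{J}_1$.
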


\begin{proof}
    We prove this by contradiction. Without loss of generality, we may assume $h^1>h^2$ somewhere on $C_{\theta}$. Hence, there exists a constant $m\geq 1$ such that
    \begin{equation*}
        mh^2 - h^1 \geq 0 \quad on\quad C_{\theta},\quad and \quad mh^2 - h^1 = 0 \quad \rm{at~some~point~} P\in C_{\theta}.
    \end{equation*}
For $p>q$, by homogeneity of $\mathcal{G}$ and $\mathcal{J}_1$ and $m\geq 1$, we have
\begin{equation*}
    \mathcal{G}(h_{ij}^1, h^1) = \mathcal{J}_1(\nabla h^1,h^1),
\end{equation*}
and
\begin{equation*}
    \mathcal{G}(mh_{ij}^2, mh^2) = m^{q-p}\mathcal{J}_1(\nabla mh^2,mh^2) \leq \mathcal{J}_1 (\nabla mh^2, mh^2).
\end{equation*}
Hence
\begin{equation*}
    \begin{split}
        0&\geq 
        \mathcal{G}(mh^2_{ij}, mh^2) - \mathcal{G}(h^1_{ij}, h^1) + \mathcal{J}_1 (\nabla h^1, h^1) - \mathcal{J}_1 (\nabla mh^2, mh^2) \\ 
        &= 
        \int_{0}^1 \frac{d\mathcal{G}(\varepsilon m h^2_{ij} + (1-\varepsilon)h^1_{ij}, \varepsilon mh^2+(1-\varepsilon)h^1)}{d
        \varepsilon}d\varepsilon - \int_0^1 \frac{d\mathcal{J}_1(\varepsilon \nabla mh^2 + (1-\varepsilon)\nabla h^1)}{d\varepsilon}d\varepsilon \\ 
        &=
        \int_0^1 \frac{\partial\mathcal{G}(\varepsilon mh^2_{ij} + (1-\varepsilon)h^1_{ij}, \varepsilon mh^2 + (1-\varepsilon)h^1)}{\partial h_{ij}}d\varepsilon \left[(mh^2 - h^1)_{ij} + (mh^2 - h^1)\delta_{ij}\right] \\  
        &\qquad 
        -\int_0^1 \frac{\partial\mathcal{J}_1 (\varepsilon\nabla mh^2 + (1-\varepsilon)\nabla h^1)}{\partial h_i}d\varepsilon (mh^2 - h^1)_i \\ 
        &\qquad 
        - \int_0^1 \frac{\partial \mathcal{J}_1 (\varepsilon \nabla mh^2 + (1-\varepsilon)\nabla h^1)}{\partial h}d\varepsilon (mh^2 - h^1).
    \end{split}
\end{equation*}
That is, $mh^2-h^1$ satisfies an elliptic inequality. The strong maximum principle \cite[Theorem 3.5]{GT83} yields $mh^2\equiv h^1$ on $C_{\theta}$. This implies that $mh^2$ solves Eq. \eqref{REq}, i.e.,
\begin{equation*}
    \begin{split}
        \mathcal{G}(mh^2_{ij}, mh^2) & = \mathcal{J}_1 (\nabla mh^2, mh^2) \\
        &= m^{n+p-q}\mathcal{J}_1(\nabla h^2,h^2) \\ 
        &= m^{n+p-q} \mathcal{G}(h^2_{ij}, h^2).
    \end{split}
\end{equation*}
    By the homogeneity of $\mathcal{G}$ implies
    \begin{equation*}
        \mathcal{G}(mh^2_{ij}, mh^2) = m^n \mathcal{G}(h^2_{ij}, h^2).
    \end{equation*}
Thus, we have $m^{p-q} = 1$, which implies that $m=1$.
\end{proof}

We define $\mathcal{L}_h$ to be the linearized operator of Eq. \eqref{REq} at $h$, that is, for each $\gamma\in C^{2,\alpha}(C_{\theta})$,
\begin{equation}\label{h4}
    \begin{split}
        \mathcal{L}_h(\gamma) &=\frac{d(\mathcal{G}((h_{\varepsilon}\gamma)_{ij}, h_{\varepsilon}\gamma) - \mathcal{J}_1(\nabla(h_{\varepsilon}\gamma),h_{\varepsilon}\gamma))}{d\varepsilon}|_{\varepsilon=0} \\ 
        &=
        \frac{\partial\mathcal{G}}{\partial h_{ij}}((h\gamma)_{ij}+h\gamma\delta_{ij}) - \frac{\partial\mathcal{J}_1}{\partial h_i}(h\gamma)_i - \frac{\partial\mathcal{J}_1}{\partial h}h\gamma,
    \end{split}
\end{equation}
where $h_{\varepsilon} = he^{\varepsilon\gamma}$.

\begin{lemma}\label{kernel}
    Let $p>q$ and $\theta\in (0,\frac{\pi}{2})$. Suppose $\gamma\in\mathcal{H}$ and satisfies
    $$\mathcal{L}_h (\gamma) = 0,$$
    then $\gamma\equiv 0 $ on $C_{\theta}$.
\end{lemma}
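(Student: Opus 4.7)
\textbf{Proof proposal for Lemma~\ref{kernel}.} The plan is to reduce the statement to a strong maximum principle for a linear elliptic operator with a strictly negative zeroth order coefficient. The key observation is that, because $\mathcal{G}$ is homogeneous of degree $n$ in $(h_{ij},h)$ and $\mathcal{J}_1$ is homogeneous of degree $n+p-q$ in $(h_i,h)$, the Euler identities
\[
h_{ij}\frac{\partial\mathcal{G}}{\partial h_{ij}}+h\frac{\partial\mathcal{G}}{\partial h}=n\,\mathcal{G},\qquad
h_i\frac{\partial\mathcal{J}_1}{\partial h_i}+h\frac{\partial\mathcal{J}_1}{\partial h}=(n+p-q)\,\mathcal{J}_1
\]
hold, and since $h$ solves Eq.~\eqref{REq} we have $\mathcal{G}=\mathcal{J}_1$ pointwise. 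Expanding $(h\gamma)_{ij}=h\gamma_{ij}+h_i\gamma_j+h_j\gamma_i+h_{ij}\gamma$ and $(h\gamma)_i=h\gamma_i+h_i\gamma$ in the defining formula \eqref{h4} and collecting terms, I expect to obtain the representation
\[
\mathcal{L}_h(\gamma)=h\,a^{ij}\gamma_{ij}+\beta^{\,i}\gamma_i-(p-q)\,\mathcal{J}_1(\nabla h,h)\,\gamma,
\]
where $a^{ij}=\partial\mathcal{G}/\partial h_{ij}=A^{ij}\det A$ is positive definite (by convexity of $h$ and the $C^2$ bound of Lemma~\ref{c2 estimate}, the operator is uniformly elliptic), and $\beta^{\,i}=2a^{ij}h_j-h\,\partial\mathcal{J}_1/\partial h_i$. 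The zeroth order coefficient is \emph{strictly negative} because $p>q$ and $\mathcal{J}_1>0$.

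Next I will identify the boundary condition inherited by $\gamma$. Since $h_\varepsilon=he^{\varepsilon\gamma}$ is required to satisfy the Robin condition $\nabla_\mu h_\varepsilon=\cot\theta\,h_\varepsilon$ on $\partial C_\theta$ for $\gamma$ to be an admissible variation, differentiating in $\varepsilon$ at $\varepsilon=0$ gives $\nabla_\mu(h\gamma)=\cot\theta\cdot h\gamma$. Using $\nabla_\mu h=\cot\theta h$ this collapses to
\[
h\,\gamma_\mu=0\quad\text{on }\partial C_\theta,
\]
i.e.\ the homogeneous Neumann condition $\gamma_\mu=0$ (as $h>0$ by Theorem~\ref{c2 theorem}).

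Finally I will conclude by the strong maximum principle together with the Hopf boundary point lemma. Suppose for contradiction that $\sup_{C_\theta}\gamma>0$. An interior maximum $\xi_\ast$ would yield $\gamma_{ij}(\xi_\ast)\le 0$, $\nabla\gamma(\xi_\ast)=0$, and the equation would give $0=\mathcal{L}_h\gamma(\xi_\ast)\le -(p-q)\mathcal{J}_1\gamma(\xi_\ast)<0$, a contradiction; a positive boundary maximum at $\xi_\ast\in\partial C_\theta$ with $\gamma$ non-constant would force $\gamma_\mu(\xi_\ast)>0$ by Hopf (applicable since the zeroth order coefficient is $\le 0$), contradicting $\gamma_\mu\equiv 0$; and $\gamma\equiv c>0$ is impossible because it plugs into the equation to give $-(p-q)\mathcal{J}_1 c\ne 0$. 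Hence $\sup\gamma\le 0$, and applying the same argument to $-\gamma$ yields $\inf\gamma\ge 0$, so $\gamma\equiv 0$.

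\textbf{Expected main obstacle.} The substantive part is the algebraic reduction in the first paragraph: one must verify carefully that the nonhomogeneous cross terms arising from distributing derivatives across the product $h\gamma$ assemble into $h\,a^{ij}\gamma_{ij}$ plus first order terms, and that the apparently complicated coefficient of $\gamma$ collapses, via the two Euler identities and the equation $\mathcal{G}=\mathcal{J}_1$, to exactly $n\mathcal{G}-(n+p-q)\mathcal{J}_1=-(p-q)\mathcal{J}_1$. Once this identity is in hand, the maximum-principle finish is routine; the sign condition $p>q$ (rather than just $p\ge q$) is essential precisely to make the zeroth order term strictly negative, which is what powers both the interior contradiction and Hopf's lemma.
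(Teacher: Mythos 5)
Your core reduction coincides with the paper's: expanding \eqref{h4} and using the degree-$n$ homogeneity of $\mathcal{G}$, the degree-$(n+p-q)$ homogeneity of $\mathcal{J}_1$, and $\mathcal{G}=\mathcal{J}_1$ along the solution, the zeroth-order coefficient collapses to $\mathcal{L}_h(1)=(q-p)\mathcal{J}_1<0$, and your interior maximum/minimum argument is exactly the paper's. Where you genuinely diverge is at the boundary. The paper's hypothesis is $\gamma\in\mathcal{H}$, i.e.\ the Robin condition $\nabla_\mu\gamma=\cot\theta\,\gamma$ on $\partial C_\theta$, and its proof then applies the first/second derivative test ``at any minimum point'' without ever invoking a boundary condition or treating boundary extrema separately; you instead re-derive the boundary condition as the homogeneous Neumann condition $\gamma_\mu=0$ (from requiring $he^{\varepsilon\gamma}$ to remain admissible, equivalently $h\gamma\in\mathcal{H}$) and close the boundary case with Hopf's lemma plus the constant-function check. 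Your boundary treatment is the more careful of the two, and the Neumann condition is indeed the one the openness step actually needs, since the additive kernel element is $\psi=h\gamma$ and $\psi\in\mathcal{H}$ if and only if $\nabla_\mu\gamma=0$. Be aware, though, that this is not literally the stated hypothesis: under the Robin condition as written, with $\cot\theta>0$, your Hopf step produces no contradiction at a positive boundary maximum (Hopf gives $\gamma_\mu(\xi_0)>0$, which is precisely what $\gamma_\mu=\cot\theta\,\gamma$ predicts there), because $-\cot\theta$ is the wrong sign for the Robin maximum principle. So as it stands you prove the Neumann version of the lemma; to match the paper's statement you should either state explicitly that the admissible-variation argument replaces $\gamma\in\mathcal{H}$ by $\nabla_\mu\gamma=0$ (which appears to be the intended reading, given the paper's own definition of $\mathcal{L}_h$ via $h_\varepsilon=he^{\varepsilon\gamma}$), or supply a separate argument for the Robin case, which neither you nor the paper currently gives.
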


\begin{proof}
    From $\mathcal{L}_h (\gamma) = 0$ and \eqref{h4}, we have
    \begin{equation*}
        \begin{split}
            0= \mathcal{L}_h(\gamma) &= \frac{\partial \mathcal{G}}{\partial h_{ij}}((h\gamma)_{ij} + h\gamma\delta_{ij}) - \frac{\partial\mathcal{J}_1}{\partial h_i}(h\gamma)_i - \frac{\partial\mathcal{J}_1}{\partial h}h\gamma \\ 
            &= 
            \gamma\mathcal{L}_h (1) + h\frac{\partial\mathcal{G}}{\partial h_{ij}}\gamma_{ij} + 2\frac{\partial\mathcal{G}}{\partial h_{ij}}h_i \gamma_j - h\frac{\partial\mathcal{J}_1}{\partial h_i}\gamma_i.
        \end{split}
    \end{equation*}
Note that the matrix $(\frac{\partial\mathcal{G}}{\partial h_{ij}})_{n\times n}$ is positive definite. Therefore, at any minimum point of $\gamma$, one has
$$h\frac{\partial\mathcal{G}}{\partial h_{ij}}\gamma_{ij} + 2\frac{\partial\mathcal{G}}{\partial h_{ij}}h_i\gamma_j - h\frac{\partial\mathcal{J}_1}{\partial h_i}\gamma_i\geq 0,$$
which implies 
$$\gamma\mathcal{L}_h (1)\leq 0.$$
By homogeneity of the Monge-Amp\`ere equation
$$\mathcal{G}(h_{ij},h) = \mathcal{J}_1(\nabla h, h),$$
    it's simple to see that if $p>q$,
    $$\mathcal{L}_h(1) = (q-p)\mathcal{J}_1 (h,\nabla h)<0.$$
    That means, the minimum value of $\gamma$ must be nonnegative. Similarly, the maximum value of $\gamma$ must be nonpositive. Hence $\gamma\equiv 0$.
\end{proof}

Now, we give the prove of Theorem \ref{main1.1}.

\begin{proof}
    Clearly $0\in\mathcal{I}$ since $h=l\in\mathcal{H}$ is a solution to $\eqref{h1}$ when $t=0$, so the set $\mathcal{I}$ is nonempty. We prove that $\mathcal{I} = [0,1]$ by showing $\mathcal{I}$ is both open and closed. The closeness of $\mathcal{I}$ follows from Theorem \ref{c2 theorem}, while the openness of $\mathcal{I}$ follows from Lemma \ref{kernel}. Then it follows that $1\in\mathcal{I}$, i.e., we obtain a positive solution to Eq. \eqref{REq}. This completes the proof.
\end{proof}

\vspace {5 mm}

\section*{Acknowledgments}

This research was supported in part by the China Postdoctoral Science Foundation under Grant Number 2025M773056, the Beijing Postdoctoral Research Foundation.

\vspace {1 cm}

\end{document}